\newtheorem{theorem}{Theorem}[section]
\newtheorem{remark}[theorem]{Remark}
\newtheorem{corollary}[theorem]{Corollary}
\newtheorem{proposition}[theorem]{Proposition}
\newtheorem{lemma}[theorem]{Lemma}
\newtheorem{definition}[theorem]{Definition}
\newtheorem{notation}[theorem]{Notation}
\newtheorem{convention}[theorem]{Convention}
\newcommand{\V}{\Vert}
\newcommand{\RR} {\mathbb R}
\newcommand{\ZZ} {\mathbb Z}
\newcommand{\pa} {\partial}
\newcommand{\Cal} {\mathcal}
\newcommand{\beq} {\begin{equation}}
\newcommand{\eeq} {\end{equation}}
\newcommand{\Vol}{\operatorname{Vol}}
\renewcommand{\Re} {\operatorname{Re}}
\newcommand{\Sec}{\operatorname{Sec}}
\newcommand{\dist}{\operatorname{dist}}
\begin{document}
\title[
Mass non-concentration and Wasserstein distance]{
Mass non-concentration at the nodal set and a sharp Wasserstein uncertainty principle}

\author{Mayukh Mukherjee}

\address{Indian Institute of Technology Bombay\\ Powai, Maharashtra - 400076 
	\\ India}

\email{mukherjee@math.iitb.ac.in, mathmukherjee@gmail.com}


\begin{abstract}
We prove $L^p$-mass concentration properties of Laplace eigenfunctions away from their nodal sets, extending a recent result in \cite{GM3} to all dimensions, and giving a slight refinement of a result in \cite{JN}. As a consequence, we are able to  derive a sharp Wasserstein uncertainty principle that holds uniformly in the high frequency regime, proving a conjecture in \cite{St1}.
\end{abstract}

\maketitle
\section{Introduction}
Consider a closed $n$-dimensional Riemannian manifold $M$ with smooth metric $g$, and the Laplace-Beltrami operator $\Delta$ on $M$ (we use the analyst's sign convention, namely, $-\Delta$ is positive semidefinite). It is known that in this setting $-\Delta$ has discrete spectrum $0 = \lambda_1 < \lambda_2 \leq \dots \leq \lambda_k \nearrow \infty$, and an $L^2$-orthonormal basis of smooth eigenfunctions satisfying
\begin{equation}\label{eq:eigen_eq}
	-\Delta \varphi_\lambda = \lambda \varphi_\lambda. 
\end{equation}
We  fix some definitions/notations.
    For an eigenvalue $\lambda$ of $-\Delta$ and a corresponding eigenfunction $\varphi_\lambda$, we denote the set of zeros (nodal set) of $\varphi_\lambda$ by $N_{\varphi_\lambda} := 
	\left\{ x \in M : \varphi_\lambda (x) = 0\right\}$. 
	We call the connected components of $ M \setminus N_{\varphi_\lambda} $ nodal domains. These are domains where the eigenfunction is not sign-changing (this follows from the maximum principle). As notation for a given nodal domain we use $ \Omega_\lambda $, or just $\Omega$ with slight abuse of notation. Further, we denote the (metric) tubular neighbourhood of width $\delta$ around the nodal set $N_{\varphi_\lambda}$ by $T_\delta$. 
	Generally, when two quantities $X$ and $Y$ satisfy $X \leq c_1 Y$ and $X \geq c_2Y$, we write $X \lesssim Y$ and $X \gtrsim Y$ respectively. When both are satisfied, 
	we write $X \sim Y$ in short. 
	Normally, 
	our estimates will be up to constants which might be dependent on the geometry of the manifold $(M, g)$, but definitely not on the 
	eigenvalues $\lambda$. Throughout the paper, $|S|$ denotes the volume of a set $S$.

\section{Mass concentration away from the nodal set} We are interested in investigating the mass (non)-concentration phenomenon on $T_{r}$, where $r \sim \lambda^{-1/2}$. In \cite{GM3}, the following result was proved:
\begin{theorem}[\cite{GM3}]\label{thm:conc_L1}
	Let $M$ be a smooth closed Riemannian manifold with sectional curvature bound $K_1 \leq \Sec_M \leq K_2$. There exists a  positive constant $ C_1 $, 
	depending only on $ K_i$, such that for all small enough positive numbers $ t, r $ (independent of $\lambda$) satisfying $0 < t \leq r^2  $, we have
	
	\begin{equation}\label{ineq:L1_conc}
		\| \varphi_\lambda\|_{L^1(T_r)} \geq \left(1 - e^{- t\lambda} - C_1\Theta_n(r^2/t) \right)\| \varphi_\lambda\|_{L^1(M)},
	\end{equation}
	where $\Theta_n (r^2/t)$ is the probability that a Euclidean Brownian particle starting at the origin exits $B(0, r)$ within time $t$. 
\end{theorem}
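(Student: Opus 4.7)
The plan rests on two ingredients: that on each nodal domain $\Omega$ the function $\varphi_\lambda$ agrees (up to sign) with the principal Dirichlet eigenfunction of $\Omega$, with eigenvalue exactly $\lambda$, and that Brownian motion on $M$ is unlikely to leave a small ball of radius $r$ within time $t\ll r^2$. Together these should bound $\|\varphi_\lambda\|_{L^1(M\setminus T_r)}$ from above by exactly the error term appearing in \eqref{ineq:L1_conc}.

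I would first fix a nodal domain $\Omega$ with $\varphi_\lambda>0$ on $\Omega$ (the negative case is handled by replacing $\varphi_\lambda$ with $-\varphi_\lambda$), write $\tau_\Omega$ for the first exit time of Brownian motion $B_s$ from $\Omega$, and combine the identity $e^{t\Delta_\Omega}\varphi_\lambda=e^{-t\lambda}\varphi_\lambda$ with the Feynman--Kac representation $(e^{t\Delta_\Omega}\varphi_\lambda)(x)=\mathbb{E}_x[\varphi_\lambda(B_t)\mathbbm{1}_{\tau_\Omega>t}]$ to get
\[
\varphi_\lambda(x)=e^{t\lambda}\,\mathbb{E}_x\bigl[\varphi_\lambda(B_t)\mathbbm{1}_{\tau_\Omega>t}\bigr],\qquad x\in\Omega.
\]
Integrating in $x$ over $\Omega$ and invoking Fubini together with the symmetry of the Dirichlet heat kernel $p_t^\Omega$, I obtain the key identity
\[
\|\varphi_\lambda\|_{L^1(\Omega)}=e^{t\lambda}\int_\Omega \varphi_\lambda(y)\,\mathbb{P}_y(\tau_\Omega>t)\,dy.
\]

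Next I restrict the integral to the ``deep interior'' $\Omega\setminus T_r$. For $y\in\Omega\setminus T_r$ one has $B(y,r)\subset\Omega$, so $\tau_\Omega\geq\tau_{B(y,r)}$, and the problem is reduced to the purely local question of bounding the Brownian exit probability from a geodesic ball. Under the two-sided bound $K_1\leq\Sec_M\leq K_2$, a standard Laplacian/heat-kernel comparison argument provides
\[
\mathbb{P}_y\bigl(\tau_{B(y,r)}\leq t\bigr)\leq C_1\,\Theta_n(r^2/t),\qquad t\leq r^2,
\]
uniformly in $y$ for small $r$, with $C_1$ depending only on $K_1,K_2$. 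Plugging this back in, bounding $\varphi_\lambda\geq 0$ times a quantity $\geq 1-C_1\Theta_n(r^2/t)$ on $\Omega\setminus T_r$, and summing over all nodal domains gives
\[
\|\varphi_\lambda\|_{L^1(M)} \geq e^{t\lambda}\bigl(1-C_1\Theta_n(r^2/t)\bigr)\|\varphi_\lambda\|_{L^1(M\setminus T_r)},
\]
which, together with the elementary observation $\tfrac{e^{-t\lambda}}{1-C_1\Theta}\leq e^{-t\lambda}+C_1'\,\Theta$ valid in the regime where the right-hand side of \eqref{ineq:L1_conc} is positive, yields the claimed mass-concentration estimate after possibly enlarging $C_1$.

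The step I expect to be the main obstacle is the Brownian exit-time comparison $\mathbb{P}_y(\tau_{B(y,r)}\leq t)\leq C_1\Theta_n(r^2/t)$ on a manifold with two-sided curvature bounds: this is where the geometry of $M$ enters and where both the upper and lower sectional curvature bounds are essential (to run Laplacian comparison both ways for the distance function, and to control the local heat kernel). Once this comparison is in hand, the rest of the argument amounts to bookkeeping with the Dirichlet heat semigroup on each nodal domain.
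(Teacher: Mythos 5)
Your proposal is correct and takes essentially the same route as the cited source. The statement is imported from~\cite{GM3}, whose very framework (as the title and the paper's own later remark about ``heat theoretic apparatus as in \cite{GM3}'' indicate) is precisely the Feynman--Kac / Brownian exit-time approach you use: on each nodal domain $\Omega$ one combines $e^{t\Delta_\Omega}\varphi_\lambda = e^{-t\lambda}\varphi_\lambda$ with the probabilistic representation of the Dirichlet heat semigroup, integrates and symmetrizes to get the survival-probability identity, restricts to $\Omega\setminus T_r$ where $B(y,r)\subset\Omega$, and bounds $\mathbb{P}_y(\tau_{B(y,r)}\le t)$ by a Euclidean comparison. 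The only place worth tightening is the exit-time comparison you flag yourself: what one actually uses there is essentially the \emph{lower} curvature (or Ricci) bound, which controls $\Delta\rho$ from above and hence keeps the outward radial drift from exceeding the Euclidean one by more than a harmless $O(r)$ term on the time scale $t\le r^2$; the upper sectional bound is not really what drives that inequality. With that correction of emphasis, the argument, including the final algebraic step $e^{-t\lambda}/(1-C_1\Theta)\le e^{-t\lambda}+C_1'\Theta$ in the regime $C_1\Theta\le 1/2$ (the complementary regime being trivial), matches the intended proof.
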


Next, in dimension $n = 2$, it was observed that one can use heat equation techniques in conjunction with harmonic measure theory (the latter not being
available in higher dimensions) to 
obtain the reverse estimate:
\begin{theorem}[\cite{GM3}]\label{thm:conc_L1_n=2} Let $M$ be a smooth closed Riemannian surface.
Given a positive constant $C_2$, one can find  positive constants $C_3, \lambda_0$ such that for $\lambda \geq \lambda_0$, 
we have
	\begin{equation}\label{eq:L1_conc}
	\| \varphi_\lambda\|_{L^p\left(T_{r_0\lambda^{-1/2}}\right)} \leq C_3 \left(1 - e^{-pt_0}\right)^{1/p}\| \varphi_\lambda\|_{L^p(M)}, \text{  for } p \in [1, \infty),
	\end{equation}
where $t_0 \leq r_0^2 \leq C_2$.
\end{theorem}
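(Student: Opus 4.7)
The plan is to localize on each nodal domain, derive a sharp Dirichlet heat semigroup inequality via Jensen, and extract the volume statement via a two-dimensional harmonic measure estimate for the exit probability. Fix a nodal domain $\Omega$; without loss of generality $\varphi_\lambda > 0$ on $\Omega$, so that $\varphi_\lambda|_\Omega$ is the positive Dirichlet ground state with eigenvalue $\lambda = \lambda_1(\Omega)$. Let $P^\Omega_t$ denote the corresponding Dirichlet heat semigroup, with stochastic representation $(P^\Omega_t f)(x) = \mathbb{E}_x[f(B_t)\mathbf{1}_{\tau_\Omega > t}]$, where $B_s$ is Brownian motion on $M$ and $\tau_\Omega$ is its first exit time from $\Omega$; then $P^\Omega_t \varphi_\lambda = e^{-t\lambda}\varphi_\lambda$.

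Set $t := t_0/\lambda$ and $r := r_0/\sqrt{\lambda}$. The first step is the pointwise Jensen inequality $(P^\Omega_t \varphi_\lambda)^p(x) \leq (P^\Omega_t \varphi_\lambda^p)(x)$ for $p \in [1,\infty)$, which follows from applying Jensen to the sub-probability measure $p_\Omega(t,x,\cdot)\,dy$ against the convex function $s \mapsto s^p$ (vanishing at $0$). Integrating over $\Omega$ and using the symmetry of the Dirichlet heat kernel yields
$$ e^{-pt_0}\int_\Omega \varphi_\lambda^p\, dx \;\leq\; \int_\Omega \varphi_\lambda^p(y)\, P_y(\tau_\Omega > t)\, dy, $$
which rearranges to $\int_\Omega \varphi_\lambda^p(y)\, P_y(\tau_\Omega \leq t)\, dy \leq (1-e^{-pt_0})\int_\Omega \varphi_\lambda^p\, dy$. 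Summing over all nodal domains (applying the same argument to $-\varphi_\lambda$ on negative ones) gives
$$ \int_M |\varphi_\lambda|^p(y)\, P_y(\tau_{\Omega(y)} \leq t)\, dy \;\leq\; (1-e^{-pt_0})\,\|\varphi_\lambda\|_{L^p(M)}^p. $$

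The second step --- and the principal obstacle --- is to secure a uniform lower bound $P_y(\tau_{\Omega(y)} \leq t) \geq c_0 > 0$ for all $y \in T_r$, with $c_0$ depending only on $(M,g)$ and the constant $C_2$. This is where dimension $n=2$ is essential. For $y \in T_r$ with $d := \dist(y, \partial\Omega) \leq r$, the nodal set $\partial\Omega$ (a smooth curve on a surface away from its finite set of self-intersections, where the crossings are transverse) contains a connected continuum through the nearest boundary point to $y$. Applying Beurling's projection theorem to the planar Brownian motion inside $\Omega \cap B(y, 2d)$ (on a wavelength-scale ball where the Riemannian metric is a bounded perturbation of the Euclidean one) shows that the $\Omega \cap B(y, 2d)$-harmonic measure of $\partial\Omega$ from $y$ is bounded below by a universal constant. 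Since the exit time from $B(y,2d)$ for two-dimensional Brownian motion is $O(d^2)$ with overwhelming probability, for $t \gtrsim d^2$ --- i.e., in the calibrated regime $t_0 \sim r_0^2$ allowed by the hypothesis --- we obtain the desired uniform lower bound $P_y(\tau_\Omega \leq t) \geq c_0$.

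Combining the two steps gives $c_0 \int_{T_r}|\varphi_\lambda|^p\, dx \leq (1-e^{-pt_0})\|\varphi_\lambda\|_{L^p(M)}^p$; taking $p$-th roots and noting that $c_0^{-1/p} \leq c_0^{-1}$ for $p \geq 1$ (since $c_0 \leq 1$) yields the claim with $C_3 := c_0^{-1}$ uniform in $p \in [1, \infty)$. The main work is thus entirely in the planar harmonic measure step; the semigroup computation packages both the $p=1$ case (where Jensen is an equality) and $p>1$ (where Jensen is strict but the resulting loss is absorbed into $C_3$) into one uniform bound.
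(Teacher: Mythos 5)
Your argument reproduces the heat-equation-plus-harmonic-measure strategy of \cite{GM3}, which the paper cites without reproving: the Jensen step for the Dirichlet heat semigroup (valid for the subprobability kernel since $s\mapsto s^p$ vanishes at $0$) together with symmetry of the kernel correctly reduces matters to a uniform lower bound on $P_y(\tau_{\Omega(y)}\le t)$ for $y\in T_r$, and Beurling's projection theorem --- combined with domain monotonicity of $\lambda_1$ to rule out the nodal arc near $y$ closing into a loop of diameter $\lesssim r_0\lambda^{-1/2}$ inside $B(y,2d)$, so that a genuine continuum of length $\gtrsim d$ is available --- is the right two-dimensional input for that hitting probability. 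The one caveat, which you already flag, is that the constant $c_0$ necessarily degenerates as $t_0/r_0^2\to 0$, so the argument requires the calibrated reading $t_0\sim r_0^2$ rather than merely $t_0\le r_0^2$; this is the correct interpretation of the theorem, since otherwise sending $t_0\to 0$ with $r_0$ fixed would force the left-hand side of \eqref{eq:L1_conc} to zero, which is absurd.
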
 

Our first goal in this paper is to 
extend Theorem \ref{thm:conc_L1_n=2} to all dimensions. To avoid inherent problems with the Brownian motion approach in higher dimensions, we completely forego working with heat theoretic apparatus as in \cite{GM3}, and revert back to ideas in elliptic pde. 
In particular, we mention the papers \cite{JN, N, CM1} as motivating influences for our investigation here. We combine them with ideas and methods from \cite{Ma, Ma1} that are well-known by now, and also some recent ideas in \cite{L1, L2, LM}. We now state our first main result:
\begin{theorem}\label{thm:mass_non_con_all_dim}
    Let $M$ be a closed smooth Riemannian manifold. Then, for $p \in [1, \infty]$, there exists a positive constant $r_p(M, g)$ (independent of $\lambda$) such that if $\delta \lesssim r_p\lambda^{-1/2}$, we have that
    \beq\label{ineq:L_p_non_con}
    \| \varphi_\lambda\|_{L^p\left(M \setminus T_\delta\right)} \gtrsim_{p, (M, g)}  \| \varphi_\lambda\|_{L^p(M)}.
    \eeq
\end{theorem}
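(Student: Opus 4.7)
The plan is to reduce the statement to the local bound
$$\|\varphi_\lambda\|_{L^p(T_\delta)} \lesssim \delta\sqrt{\lambda}\,\|\varphi_\lambda\|_{L^p(M)}$$
valid uniformly for $\delta \leq c\lambda^{-1/2}$. Once this is in hand, choosing $r_p$ so that the implicit constant times $r_p$ is at most $1/2$ and using
$$\|\varphi_\lambda\|_{L^p(T_\delta)}^p + \|\varphi_\lambda\|_{L^p(M \setminus T_\delta)}^p = \|\varphi_\lambda\|_{L^p(M)}^p$$
immediately yields \eqref{ineq:L_p_non_con}; the case $p = \infty$ is handled analogously via $\|\varphi_\lambda\|_{L^\infty(M)} = \max\left(\|\varphi_\lambda\|_{L^\infty(T_\delta)}, \|\varphi_\lambda\|_{L^\infty(M \setminus T_\delta)}\right)$.

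For the local bound, the key pointwise input is the standard Cheng--Yau-type gradient estimate for Laplace eigenfunctions at the natural scale (as used in \cite{Ma}):
$$\|\nabla \varphi_\lambda\|_{L^\infty(B(x, r))} \lesssim \sqrt{\lambda}\,\|\varphi_\lambda\|_{L^\infty(B(x, 2r))}, \qquad r \leq c\lambda^{-1/2}.$$
For any $x \in T_\delta$ there exists $y \in N_{\varphi_\lambda}$ with $\dist(x,y) \leq \delta$, so integrating the gradient along a minimizing geodesic from $x$ to $y$ gives
$$|\varphi_\lambda(x)| = |\varphi_\lambda(x) - \varphi_\lambda(y)| \lesssim \delta\sqrt{\lambda}\,\|\varphi_\lambda\|_{L^\infty(B(x, c\lambda^{-1/2}))},$$
provided $\delta \leq (c/2)\lambda^{-1/2}$. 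The case $p=\infty$ follows at once, since the right-hand side is bounded by a constant multiple of $\delta\sqrt{\lambda}\,\|\varphi_\lambda\|_{L^\infty(M)}$, which is strictly less than $\|\varphi_\lambda\|_{L^\infty(M)}$ once $r_\infty$ is chosen small enough.

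For $p \in [1, \infty)$ I upgrade the pointwise bound by a Moser-type reverse H\"older inequality at scale $\lambda^{-1/2}$,
$$\|\varphi_\lambda\|_{L^\infty(B(x, c\lambda^{-1/2}))}^p \lesssim \lambda^{n/2}\,\|\varphi_\lambda\|_{L^p(B(x, 2c\lambda^{-1/2}))}^p.$$
Raising the pointwise bound to the $p$-th power, integrating over $T_\delta$, and interchanging the order of integration (each $y \in M$ being charged by a set of $x \in T_\delta$ of volume at most $|B(y, 2c\lambda^{-1/2})| \lesssim \lambda^{-n/2}$) produces
$$\int_{T_\delta} |\varphi_\lambda|^p \lesssim (\delta\sqrt{\lambda})^p\,\lambda^{n/2} \int_M |\varphi_\lambda(y)|^p\,|B(y, 2c\lambda^{-1/2})|\,dy \lesssim (\delta\sqrt{\lambda})^p\,\|\varphi_\lambda\|_{L^p(M)}^p,$$
which is the required local bound.

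The main technical point will be verifying the two local inputs at scale $\lambda^{-1/2}$ with constants depending only on $(M,g)$ (and on $p$), not on $\lambda$. The gradient estimate is classical (cf.\ \cite{Ma}), and the reverse H\"older inequality follows from standard Moser iteration applied to $(-\Delta - \lambda)u = 0$, since at scale $r \lesssim \lambda^{-1/2}$ the zeroth-order coefficient $\lambda$ contributes a bounded perturbation and can be absorbed without $\lambda$-dependent loss. The more refined tools of \cite{L1, L2, LM, CM1} are not strictly necessary to obtain \eqref{ineq:L_p_non_con} in its present form, but clarify the quantitative geometric picture in which this kind of estimate naturally sits.
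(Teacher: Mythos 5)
Your proof is correct and takes a genuinely different, and in fact more elementary, route than the paper. The paper proceeds through a covering of $M$ by wavelength balls, shows that ``good'' balls (in the sense of controlled frequency function / doubling exponent) carry a definite fraction of the $L^p$-mass, and then uses Mangoubi-type local asymmetry of nodal domains together with Logunov's lemma to inscribe smaller wavelength balls of controlled growth in which $\varphi_\lambda$ is sign-definite. Your argument dispenses with all of the doubling machinery and instead combines three elementary inputs at the natural scale: (i) $\varphi_\lambda$ vanishes on $N_{\varphi_\lambda}$, (ii) the Cheng--Yau-type gradient bound $\|\nabla \varphi_\lambda\|_{L^\infty(B(x,r))} \lesssim \sqrt{\lambda}\,\|\varphi_\lambda\|_{L^\infty(B(x,2r))}$ for $r \lesssim \lambda^{-1/2}$ (obtained after rescaling (\ref{eq:eigen_rescale}) from interior Schauder or $W^{2,p}$ estimates), and (iii) the local Moser maximum principle (\ref{GT}), which for solutions of $L\varphi=0$ applied to both $\varphi$ and $-\varphi$ yields the reverse H\"older inequality you quote with $\lambda$-independent constant after wavelength rescaling. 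The interchange of integration is sound: for each $y$, the set $\{x \in T_\delta : d(x,y) < 2c\lambda^{-1/2}\}$ has volume $\lesssim \lambda^{-n/2}$, which exactly cancels the $\lambda^{n/2}$ from the reverse H\"older step. Two additional observations worth recording. First, your argument actually proves the stronger quantitative bound $\|\varphi_\lambda\|_{L^p(T_\delta)} \lesssim_{p,(M,g)} \delta\sqrt{\lambda}\,\|\varphi_\lambda\|_{L^p(M)}$, i.e.\ linear decay in the tube width; this is precisely the $n$-dimensional analogue of Theorem \ref{thm:conc_L1_n=2}, which the paper obtains only for $n=2$ via harmonic measure. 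Second, what you lose relative to the paper's proof is the refinement in Remark \ref{rem:JN}: the paper's construction inscribes sign-definite balls away from $N_{\varphi_\lambda}$ and therefore controls $\|\varphi^+_\lambda\|_{L^p(M\setminus T_\delta)}$ and $\|\varphi^-_\lambda\|_{L^p(M\setminus T_\delta)}$ separately, whereas your bound controls only $|\varphi_\lambda|$ and gives no information on how the surviving mass is split between the two signs. For the statement of Theorem \ref{thm:mass_non_con_all_dim} itself, and for the Wasserstein application in Section \ref{sec:thm_uncer}, that refinement is not used, so your argument suffices.
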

A few comments are in place.

\begin{itemize}
    \item The nodal set of any eigenfunction $\varphi_\lambda$ is $C(M, g) \lambda^{-1/2}$-dense in $M$ for some $C(M, g)$ independent of $\lambda$\footnote{We remind the reader that there are two popular proofs of this fact, one uses domain monotonicity of Dirichlet eigenvalues, and the other uses Harnack inequality on the harmonic function $u(x, t) := e^{\sqrt{\lambda}t}\varphi_\lambda(x)$ in $M \times\RR$.}. Clearly, in the regime $\delta \geq C(M, g)\lambda^{-1/2}$, we have that $T_r = M$. So in particular, the bound in (\ref{ineq:L_p_non_con}) 
    makes sense only in the regime $r << C(M, g)\lambda^{-1/2}$.
	\item Theorems \ref{thm:conc_L1},  \ref{thm:conc_L1_n=2} and \ref{thm:mass_non_con_all_dim} can be interpreted as shedding light on rough ``aggregated'' doubling/growth conditions. We recall that doubling indices of the type
	\begin{equation}
		\log \frac{\sup_{B(x, 2r)}|\varphi_\lambda|}{\sup_{B(x, r)}|\varphi_\lambda|}, \quad x \in M, \quad r > 0,
	\end{equation}
		have found extensive applications in the study of vanishing orders and nodal volumes (cf. \cite{DF}, \cite{L1}, \cite{L2}, \cite{S}). A result of Donnelly-Fefferman (\cite{DF}) states that such doubling indices are at most at the order of $ \sqrt{\lambda} $. It is expected, however, that such a saturation happens rarely (cf. \cite{DF}, \cite{L1}). In fact, on average the doubling indices should be bounded by a uniform constant (independent of $ \lambda $) - we refer to the works of Nazarov-Polterovich-Sodin and Roy-Fortin (\cite{NPS}, \cite{R-F}).
	\item 
	As an illustrative example of Theorem \ref{thm:mass_non_con_all_dim}, consider the case of the highest weight spherical harmonics $(x_1 + ix_2)^l$ on $S^2$ (as is well-known, the corresponding eigenvalue is $l(l + 1)$).  One can calculate that (up to constants) $$
	\displaystyle{\| (x_1 + ix_2)^l \|_{L^p(S^2)}^p = \frac{\Gamma\left( \frac{lp}{2} + 1\right)}{\Gamma\left( \frac{lp}{2} + \frac{3}{2}\right)} \sim \left(\frac{lp}{2} + 1\right)^{-\frac{1}{2}}}$$
	for high enough $l$ (see \cite{Z1}, Chapter 4). However, it follows from another standard  computation that after converting to spherical coordinates, for high enough $l$, we have that
 \begin{align*}
    \| \Re (x_1 + ix_2)^l \|_{L^p(T_{1/l})}^p & \lesssim c_2(p)\int_{0}^{\frac{\pi}{2}} |\cos^{lp}\theta|\; d\theta \sim c_2(p) \frac{\Gamma\left( \frac{lp}{2}\right)}{\Gamma\left( \frac{lp}{2} + \frac{1}{2}\right)} \sim_p  (lp)^{-1/2}, 
\end{align*}
where the constant $c_2(p) \searrow 0$ as $p \nearrow \infty$. This shows  mass (non)concentration properties around nodal sets for Gaussian beams on $S^2$. 

\item The case $p = \infty$ in (\ref{ineq:L_p_non_con}) being straightforward, we indicate the proof separately. Choose a point $x_0$ where $\varphi_\lambda$ achieves its maximum over $M$. Let us normalise $\| \varphi_\lambda\|_{L^\infty} = 1$. Then, by standard elliptic estimates one can show that $\|\nabla  \varphi_\lambda\|_{L^\infty} \leq C\sqrt{\lambda}$, which shows that there is a wavelength inscribed ball at $x_0$.  
	
\item The proof of Theorem \ref{thm:mass_non_con_all_dim} gives a slight refinement of Theorem 1 of \cite{JN}, see Remark \ref{rem:JN} below.
\end{itemize}

\subsection{Wasserstein distance and uncertainty principle} The concept of the Wasserstein metric as a ``distance between two measures''  was introduced in \cite{V, D}, and has now become mainstream in the study of optimal transport and allied applications to partial differential equations and geometry. The basic definitions and preliminaries required for our use have been outlined below in Section \ref{sec:thm_uncer}; for more details, we refer the reader to \cite{Vi}. 
Of late, there has been a spurt of interest in uncertainty principles tied to the Wasserstein distance (see \cite{St2, SS, CMO} and references therein). The typical such result is of the following form: given a ``nice enough'' function $f$ on a manifold $(M^n,g)$, the product of the Wasserstein distance between the positive and negative parts of the function and the ``size'' of the zero set of the function (typically encoded by $(n - 1)$-Hausdorff measure) is bounded below by some expression depending on $\|f\|_{L^1}, \|f\|_{L^\infty}$, and the geometry $(M, g)$. 

Here we wish to investigate this problem for the very special situation where $f$ is an eigenfunction of the Laplace-Beltrami operator. As a primary motivational example, consider the eigenfunctions $f_k := \sin kx$ (or $\cos kx$) on the flat $2$-torus $\mathbb{T}^2 = \RR/(2\pi \ZZ) \times \RR/(2\pi \ZZ)$. It is not trivial to explicitly evaluate the Wasserstein distance $W_p(f^+_k, f^-_k)$, but one can estimate that it has to be at the scale $\sim 1/k$. This is also quite intuitive, as $f^{\pm}_k$ are ``off in phase'' to the order of $\sim 1/k$, which is the scale as which mass transportation has to happen. But the problem becomes significantly harder when one considers linear combinations of $\sin kx$ and $\cos kx$, not to mention that such methods cannot even remotely approach the problem when one talks about spherical harmonics, and eigenfunctions on general Riemannian manifolds. 

Now, consider a closed Riemannian manifold $(M,g)$ of dimension $n$ and let $\varphi_\lambda$ be a Laplace eigenfunction on $M$. 
Let $\varphi_\lambda^{+} := \max\{\varphi_\lambda, 0\}$ and $\varphi_\lambda^- := - \min\{\varphi_\lambda, 0\}$. We are interested in deriving general lower bounds on the Wasserstein distance $W_1(\mu, \nu)$, where $\mu = \varphi_\lambda^+ \; dx$ and $\nu = \varphi_\lambda^-\; dx$, and $dx$ is the Riemannian volume  element on $M$. 
Our proof uses properties which are rather specific to Laplace eigenfunctions, so we are able to prove a sharp bound with a rather simple expression, as conjectured in Section 3.3 of \cite{St1} for the case $p = 1$. With that in place, now we can state our second main result:
\begin{theorem}\label{thm:W_p_bound}
 On a smooth closed Riemannian manifold $M$, 
 we have that, 
 \beq\label{ineq:Wasser_bound}
 W_1(\varphi_\lambda^+\; dx, \varphi_\lambda^-\; dx) \gtrsim_{(M, g)} \frac{1}{\sqrt{\lambda}}
 \| \varphi_\lambda \|_{L^1(M)}.
 \eeq
 \end{theorem}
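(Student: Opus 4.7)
The plan is to use the Kantorovich--Rubinstein duality formula
\[
W_1(\mu,\nu) = \sup\Bigl\{\int_M f\,d(\mu-\nu) : f\colon M\to\RR,\; \|f\|_{\mathrm{Lip}}\leq 1\Bigr\},
\]
which is applicable because $\mu=\varphi_\lambda^+\,dx$ and $\nu=\varphi_\lambda^-\,dx$ have the same total mass (since $\varphi_\lambda\perp 1$ in $L^2(M)$ for $\lambda>0$, so $\int_M\varphi_\lambda\,dx=0$). The strategy is then to test this supremum against a specific $1$-Lipschitz function built out of the nodal set geometry.

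Concretely, I would take
\[
f(x) := \dist(x, N_{\varphi_\lambda})\cdot \operatorname{sgn}(\varphi_\lambda(x)),
\]
with the convention $f\equiv 0$ on $N_{\varphi_\lambda}$. To check that $f$ is $1$-Lipschitz, consider two points $x,y\in M$. If $\varphi_\lambda(x)$ and $\varphi_\lambda(y)$ have the same sign (or one vanishes), then the estimate $|f(x)-f(y)|\leq d(x,y)$ follows from the $1$-Lipschitz property of the distance function alone. If they have opposite signs, then any curve joining $x$ to $y$ must cross $N_{\varphi_\lambda}$ at some point $z$, and hence $d(x,y)\geq d(x,z)+d(z,y)\geq \dist(x,N_{\varphi_\lambda})+\dist(y,N_{\varphi_\lambda})=|f(x)-f(y)|$. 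So $f$ is indeed admissible in the duality, and
\[
W_1(\varphi_\lambda^+\,dx,\varphi_\lambda^-\,dx)\;\geq\; \int_M f\cdot \varphi_\lambda\,dx \;=\; \int_M \dist(x,N_{\varphi_\lambda})\,|\varphi_\lambda(x)|\,dx.
\]

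Finally, to lower bound this integral I would invoke Theorem \ref{thm:mass_non_con_all_dim} in the case $p=1$: there exists $r_1=r_1(M,g)>0$, independent of $\lambda$, such that with $\delta := r_1\lambda^{-1/2}$ one has
\[
\|\varphi_\lambda\|_{L^1(M\setminus T_\delta)} \;\gtrsim_{(M,g)}\; \|\varphi_\lambda\|_{L^1(M)}.
\]
Since on $M\setminus T_\delta$ we have $\dist(x,N_{\varphi_\lambda})\geq \delta \sim \lambda^{-1/2}$ by definition of the tubular neighbourhood, we obtain
\[
\int_M \dist(x,N_{\varphi_\lambda})\,|\varphi_\lambda(x)|\,dx \;\geq\; \delta\cdot\|\varphi_\lambda\|_{L^1(M\setminus T_\delta)} \;\gtrsim_{(M,g)}\; \frac{1}{\sqrt{\lambda}}\,\|\varphi_\lambda\|_{L^1(M)},
\]
which, combined with the duality inequality above, yields the claimed bound.

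The essentially only non-routine step is the verification that $f$ is $1$-Lipschitz globally (which crucially exploits that $f$ vanishes on the nodal interface, so the jump in sign is compensated by the distance factor); all the analytic weight of the argument is hidden inside Theorem \ref{thm:mass_non_con_all_dim}, which provides the mass non-concentration needed to ensure a nontrivial contribution from the region at distance $\gtrsim \lambda^{-1/2}$ away from $N_{\varphi_\lambda}$. Sharpness in $\lambda$ is built in: the factor $\lambda^{-1/2}$ is exactly the wavelength scale at which mass must be transported between $\varphi_\lambda^+$ and $\varphi_\lambda^-$, as illustrated by the one-dimensional model $\sin(\sqrt\lambda\, x)$.
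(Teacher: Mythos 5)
Your proof is correct and follows essentially the same route as the paper: Kantorovich--Rubinstein duality, a judiciously chosen $1$-Lipschitz test function, and then Theorem \ref{thm:mass_non_con_all_dim} with $p=1$ to lower bound the resulting integral. The only (minor, cosmetic) difference is the test function: the paper uses a capped interpolant $f(x) = \frac{R(d(x,Z)-d(x,Y))}{4(d(x,Z)+d(x,Y))}$ between $Y = \bigcup_j(\Omega_j^+\setminus T_{r\lambda^{-1/2}})$ and $Z = \bigcup_k(\Omega_k^-\setminus T_{r\lambda^{-1/2}})$, whereas you use the signed distance $f(x)=\dist(x,N_{\varphi_\lambda})\,\operatorname{sgn}(\varphi_\lambda(x))$, which is arguably cleaner, and your verification of its $1$-Lipschitz property (via a minimizing geodesic necessarily meeting the nodal set) is correct.
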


\noindent\begin{remark}
    (a) The inequality (\ref{ineq:Wasser_bound}) can be reversed in all dimensions, and has already been proved, see \cite{St2, SS, CMO}. This completes the proof of the conjecture in \cite{St1} for the case $p = 1$. After the first version of this article was posted proving (\ref{ineq:Wasser_bound}) in dimension $n = 2$, (\ref{ineq:Wasser_bound}) was proved in all dimensions and all $p \in [1, \infty)$ by \cite{D-PF}. However, our original approach via heat equation, and our updated approach via doubling exponents and elliptic PDEs both seem quite different from the aforementioned paper. \newline
    (b) 
    The example mentioned above for the torus $\mathbb{T}^2 = \RR/(2\pi \ZZ) \times \RR/(2\pi \ZZ)$ shows that the estimate (\ref{ineq:Wasser_bound}) is sharp in general. \newline
    (c) The comparability constant in (\ref{ineq:Wasser_bound}) becomes a universal constant in the case of Euclidean domains. 

\end{remark}

Applying the estimate on the size of the nodal set in \cite{Br}, \cite{L1}, we have the following Wasserstein uncertainty principle (see Theorem 2 of \cite{St2}) as an immediate 
consequence of Theorem \ref{thm:W_p_bound}:
\begin{corollary}[Wasserstein uncertainty principle] Let $M$ be a smooth closed Riemannian manifold, and let $\varphi_\lambda$ be normalised so that $|\varphi_\lambda|\;  dx$ is a probability measure. Then, for high frequency $\lambda$, we have that
\begin{equation}\label{eq:Was_uncer_prin}
    W_1(\varphi_\lambda^+\; dx, \varphi_\lambda^-\; dx)\; \Cal{H}^{n - 1}\left(N_{\varphi_\lambda}\right) \gtrsim_{(M, g)} 1.
\end{equation}
\end{corollary}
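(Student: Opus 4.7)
The plan is to combine two sharp lower bounds: the Wasserstein estimate of Theorem \ref{thm:W_p_bound} and the sharp Yau-type lower bound on the $(n-1)$-Hausdorff measure of the nodal set. First I would record the normalisation hypothesis — the assumption that $|\varphi_\lambda|\,dx$ is a probability measure is precisely $\|\varphi_\lambda\|_{L^1(M)} = 1$ — so that Theorem \ref{thm:W_p_bound} immediately gives
\[
W_1(\varphi_\lambda^+\,dx,\varphi_\lambda^-\,dx) \gtrsim_{(M,g)} \frac{1}{\sqrt{\lambda}}.
\]
Next, I would invoke Brüning's classical lower bound \cite{Br} in dimension $n = 2$, together with Logunov's resolution of the lower half of Yau's nodal volume conjecture in \cite{L1} in general dimension, to conclude that
\[
\mathcal{H}^{n-1}(N_{\varphi_\lambda}) \gtrsim_{(M,g)} \sqrt{\lambda}
\]
uniformly in the high frequency regime. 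Multiplying the two displayed inequalities, the $\sqrt{\lambda}$ factors cancel exactly and produce (\ref{eq:Was_uncer_prin}).

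There is essentially no obstacle to overcome here: the entire non-triviality of the corollary is already absorbed into Theorem \ref{thm:W_p_bound}, and both the Wasserstein lower bound and the Brüning--Logunov nodal volume bound hold with constants depending only on $(M,g)$, which combine multiplicatively. It is worth noting that the resulting uncertainty principle is qualitatively sharp in general — it is saturated, up to constants, by the highest weight spherical harmonics $(x_1 + i x_2)^l$ on $S^2$ already discussed in the paper — so no loss in exponent is incurred by this direct combination.
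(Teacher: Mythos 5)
Your proposal is correct and matches the paper's argument exactly: normalise so $\|\varphi_\lambda\|_{L^1(M)}=1$, apply Theorem \ref{thm:W_p_bound} to get $W_1 \gtrsim \lambda^{-1/2}$, invoke Br\"{u}ning and Logunov for $\mathcal{H}^{n-1}(N_{\varphi_\lambda}) \gtrsim \sqrt{\lambda}$, and multiply. The paper states this as an immediate consequence in precisely the same way.
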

Here, $\Cal{H}^k\left(N_{\varphi_\lambda}\right)$ represents the $k$-dimensional Hausdorff measure of the nodal set  $N_{\varphi_\lambda}$. 
This falls in line with the heuristic described in \cite{St2}: if $\Cal{H}^{n - 1}\left(N_{\varphi_\lambda}\right)$ is large, then one expects the  nodal set $N_{\varphi_\lambda}$ to be highly dense, which implies that the positive and negative nodal domains are ``close'' to each other, which will lower the Wasserstein distance between them by making mass transportation more efficient. This heuristic makes sense in the opposite direction too. Also, note that one can just consider scalar multiples of $\varphi_\lambda$ without changing $N_{\varphi_\lambda}$, but the normalisation arising from the assumption of probability measure makes the uncertainty principle scale invariant. 

\section{Proof of Theorem \ref{thm:mass_non_con_all_dim}}\label{sec:non_con_proof} 
\subsection{Preliminaries} 
\subsubsection{Frequency functions and doubling exponents} 
First, we explain the following principle, following \cite{DF}, \cite{N} and Section $2$ of \cite{Ma1}: on a small scale comparable to the wavelength $1/\sqrt{\lambda}$, Laplace eigenfunctions behave qualitatively like harmonic functions. To see this, fix an atlas on $M$ for which all transition maps are bounded in $C^1$ norm, and let $g_{ij}$ denote the coefficients of the metric $g$ in local coordinates. In each chart, we have 
\beq\label{ineq:loc_chart}
\| g^{ij}\|_{C^1} \leq K_1, g = \det g_{ij} \leq K_2,
\eeq
and the ellipticity bound 
\beq\label{ineq:loc_chart_1}
g^{ij}\xi_i\xi_j \geq K_3 |\xi|^2.
\eeq
In local coordinates, the eigenequation (\ref{eq:eigen_eq}) reads:
\beq\label{eq:eigeneq_loc_coord}
-\frac{1}{\sqrt{g}}\pa_i (g^{ij}\sqrt{g}\pa_j \varphi_\lambda) = \lambda \varphi_\lambda.
\eeq
Since the above is satisfied pointwise, we can look in small balls $B_r = B(0, r)$, where $r = \frac{\sqrt{\varepsilon_0}}{\sqrt{\lambda}}$, and $\varepsilon_0$ is a small positive number to be chosen later (but independent of $\lambda$). Rescaling (\ref{eq:eigeneq_loc_coord}) to a unit ball $B_1$, we get 
\beq\label{eq:eigen_rescale}
- \pa_i (g^{ij}_r\sqrt{g_r} \pa_j \varphi_{\lambda, r}) = \varepsilon_0 \sqrt{g_r}\varphi_{\lambda, r} \quad \text{on  } B_1,
\eeq
where $\varphi_{\lambda, r}(x) := \varphi_\lambda (rx)$ is the scaled function obtained from $\varphi_\lambda$. Observe that we still have similar bounds as (\ref{ineq:loc_chart}), (\ref{ineq:loc_chart_1}) on the rescaled metric coefficients, as $r < 1$. 

If we let $a^{ij} = g_r^{ij}\sqrt{g_r},\text{ } q = \sqrt{g_r}$, we are in the following setting: let $B_1$ denote the unit ball in $\RR^n$, and let $\varphi$ satisfy
 \beq\label{ellip}
 L\varphi = 0
 \eeq
 on $B_1$, where $L$ is a second order elliptic operator with smooth coefficients. $L$ is of the form
 \beq\label{eq:def_L_pdo}
 L u = L_1u  - \varepsilon_0 qu,
 \eeq
 where
 \[
 L_1u = -\pa_i(a^{ij}\pa_j u),\]
 and we have the following properties:\newline
(a) $a^{ij}$ is symmetric and satisfies the ellipticity bounds
 \[
 \kappa_1|\xi|^2 \leq a^{ij}\xi_i\xi_j \leq \kappa_2|\xi|^2.
 \]
 (b) $a^{ij}$, $q$ are bounded by $\V a^{ij}\V_{C^1(\overline{B_1})} \leq K, \quad 0 \leq q \leq K$. The main idea of the above principle is that, $\varepsilon_0$ can be chosen small enough so that $L$ is close to the Euclidean Laplacian (after a linear change of coordinates) and $\varphi$ displays behaviour similar to harmonic functions. 

Next, we recall and collect a few relevant facts about doubling exponents and different notions of frequency functions - these include scaling and monotonicity results.

For $\varphi$ satisfying (\ref{ellip}) in $B_1$, define for $r < 1$ the following $r$-growth exponent:
\beq\label{grow}
\beta_r(\varphi) = \text{log }\frac{\sup_{B_1}|\varphi|}{\sup_{B_r}|\varphi|},\eeq

A fundamental result of ~\cite{DF} says the following: 
\begin{theorem}\label{thm:DF-bounds}
	There exist constants $ C = C(M, g) > 0 $ and $ r_0(M, g) > 0 $ such that for every point $ p $ in $ M $ and every $ 0 < r < r_0 $ the following growth exponent holds:
	\begin{equation}
		\sup_{B(p, r)} |\varphi_\lambda| \leq  \left( \frac{r}{r'}\right)^{C\sqrt{\lambda}} \sup_{B(p, r')} |\varphi_\lambda|, \quad 0 < r' < r.
	\end{equation}
\end{theorem}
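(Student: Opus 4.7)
The plan is to reduce the growth exponent bound for the eigenfunction $\varphi_\lambda$ to a doubling estimate for a genuine harmonic function via the classical harmonic lift. Define
\[
u(x,t) := \varphi_\lambda(x)\cosh(\sqrt{\lambda}\,t)
\]
on the product manifold $M \times \RR$ equipped with the product metric $g \oplus dt^2$. Since $-\Delta_M \varphi_\lambda = \lambda \varphi_\lambda$ and $\pa_t^2 \cosh(\sqrt{\lambda}\,t) = \lambda \cosh(\sqrt{\lambda}\,t)$, a direct computation gives $\Delta_{M \times \RR} u = 0$, and the slice $u(\cdot, 0) = \varphi_\lambda$ recovers all information about the eigenfunction in terms of an $(n{+}1)$-dimensional harmonic function.

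Next, I would apply an Almgren-type frequency monotonicity formula to $u$ in a normal coordinate chart around $(p, 0)$. Since the product metric is a small $C^1$ perturbation of the Euclidean metric on $\RR^{n+1}$, the Garofalo--Lin quasi-monotonicity formula for elliptic operators with $C^1$ coefficients implies that the frequency
\[
N(r) := \frac{r \int_{B(p_0, r)} |\nabla u|^2}{\int_{\pa B(p_0, r)} u^2},
\]
where $p_0 = (p, 0)$ and the balls and spheres are taken in the product metric, satisfies $e^{Cr} N(r)$ non-decreasing on some interval $(0, r_0]$. Integrating the standard identity $(\log H)'(r) = 2N(r)/r + O(1)$, with $H(r) := \int_{\pa B(p_0, r)} u^2$, yields the spherical $L^2$-doubling inequality
\[
\frac{H(R)}{H(R')} \leq C \left(\frac{R}{R'}\right)^{2 N(r_0) + O(1)}, \quad 0 < R' < R \leq r_0.
\]

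The delicate step is to establish $N(r_0) \leq C\sqrt{\lambda}$ uniformly in $p \in M$. From the explicit form of the lift, $|u| \leq e^{r_0\sqrt{\lambda}} \sup_M |\varphi_\lambda|$ on $B(p_0, r_0)$. Choosing a point $p^\ast \in M$ at which $|\varphi_\lambda|$ attains its supremum, the upper bound is matched from below at $(p^\ast, 0)$ up to a dimensional constant, so the monotonicity formula immediately yields $N \lesssim \sqrt{\lambda}$ at $p^\ast$. For an arbitrary $p \in M$, I would propagate this bound along a finite chain of overlapping balls exploiting compactness of $M$, the already-established local doubling at each step absorbing the successive ratios into a universal constant times $\sqrt{\lambda}$.

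Finally, to convert the spherical $L^2$ doubling for $u$ into an $L^\infty$ doubling on balls for $\varphi_\lambda$, I would use that $u^2$ is subharmonic, so the Moser mean value inequality gives $\sup_{B(p_0, r/2)} u^2 \lesssim r^{-(n+1)} \int_{B(p_0, r)} u^2$, while the reverse direction follows by enlarging the ball slightly. Restricting to the slice $t = 0$, where $u = \varphi_\lambda$, and using that $\cosh(\sqrt{\lambda}\,t)$ is comparable to $1$ for $|t| \lesssim 1/\sqrt{\lambda}$, converts the doubling for $u$ into the claimed estimate $\sup_{B(p,r)} |\varphi_\lambda| \leq (r/r')^{C\sqrt{\lambda}} \sup_{B(p,r')}|\varphi_\lambda|$. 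The main obstacle is the uniform frequency bound across all of $M$: the harmonic lift and the monotonicity formula are by now classical, but propagating $N \leq C\sqrt{\lambda}$ to every base point without introducing additional powers of $\sqrt{\lambda}$ into the exponent is where the compactness/chaining argument has to be executed with care.
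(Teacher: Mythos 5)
The paper does not give its own proof of this statement: it is quoted directly from Donnelly--Fefferman \cite{DF}, and the frequency bound that the paper \emph{does} state (Lemma~\ref{lem:Frequency-Bound}, imported from Lemma~6 of \cite{BL}) is deduced later \emph{from} Theorem~\ref{thm:DF-bounds} together with the doubling-to-frequency implication, not the other way around. Your sketch therefore cannot be matched against a paper proof; what you are reconstructing is essentially the frequency-function proof of the Donnelly--Fefferman growth bound, which is a well-known \emph{alternative} to the original Carleman-estimate argument of \cite{DF} and goes back to Lin and Garofalo--Lin, with modern expositions in the Logunov and Logunov--Malinnikova papers that this manuscript also cites.

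On correctness: your steps (1)--(3) (harmonic lift, Garofalo--Lin quasi-monotonicity of $N(r)$ on a $C^1$-perturbed metric, integration of $(\log H)'=2N/r+O(1)$, Moser iteration on the subharmonic $u^2$ to pass to $\sup$-norms, restriction to $t=0$ where $\cosh(\sqrt\lambda\,t)\sim1$) are all standard and correct. The genuinely nontrivial point, which you flag, is step (4): getting $N\lesssim\sqrt\lambda$ \emph{uniformly} over $M$. At the maximum point $p^\ast$ this is immediate, because $\sup_{B(p_0^\ast,r')}|u|\ge|u(p_0^\ast)|=\sup_M|\varphi_\lambda|$ for every $r'>0$, while $\sup_{B(p_0^\ast,r_0)}|u|\le e^{r_0\sqrt\lambda}\sup_M|\varphi_\lambda|$, so the growth exponent and hence the frequency at $p_0^\ast$ are $O(\sqrt\lambda)$ with no ambiguity. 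The propagation to a general $p$ cannot simply "transfer the frequency bound from ball to ball," because a naive transitivity of doubling across a chain of $k$ balls accumulates a $C^k$ in the exponent. The correct mechanism is the three-spheres inequality: using the global a priori upper bound $|u|\le e^{r_0\sqrt\lambda}\sup_M|\varphi_\lambda|$ and the lower bound at $p_0^\ast$, each application of three-spheres costs a multiplicative factor $e^{-c\sqrt\lambda}$ in the lower bound on $\sup_{B}|u|$, and because $M$ is compact the chain from $p^\ast$ to any $q$ has length $O(1)$ depending only on $(M,g)$; the total loss is therefore $e^{-C(M,g)\sqrt\lambda}$. This yields $\sup_{B(q,r)}|u|\gtrsim e^{-C\sqrt\lambda}\sup_M|\varphi_\lambda|$ for fixed $r$ and all $q$, which combined with the global upper bound gives the uniform doubling (and hence frequency) bound. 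You should make this three-spheres/constant-chain-length mechanism explicit; as written, "the already-established local doubling at each step absorbing the successive ratios" is too vague to rule out an accumulating constant.

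One further small point: you should quantify the radii in the overlapping chain so that each three-spheres application takes place with fixed, $\lambda$-independent radii ratios; otherwise the exponent $\alpha$ in the three-spheres inequality degenerates and the constant $C(M,g)$ in the final bound is not actually uniform. With those two points tightened, your proposal is a correct and self-contained proof of Theorem~\ref{thm:DF-bounds} by a route that is different from, and arguably cleaner than, the Carleman-estimate proof in the cited source.
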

In particular, for a scaled eigenfunction $\varphi$ as defined above, we have
\beq\label{df}
\frac{\beta_r(\varphi)}{\text{log}(1/r)} \lesssim \sqrt{\lambda}.
\eeq

Closely related to the idea of doubling exponent is the concept of frequency function, which we now recall (see ~\cite{GL1}, ~\cite{GL2}). For $u$ satisfying $L_1u = 0$ in $B_1$, define for $a \in B_1$, $r \in (0, 1]$ and $B(a, r) \subset B_1$,
\begin{align*}
D(a, r) & = \int_{B(a, r)}|\nabla u|^2dV, \\
H(a, r) & = \int_{\pa B(a, r)} u^2 dS.
\end{align*}

Then, define the generalized frequency of $\varphi$ by
\beq\label{def: freq-func}
\tilde{N}(a, r) = \frac{rD(a, r)}{H(a, r)}.
\eeq

We note that ~\cite{L1} and ~\cite{L2} use a variant of $\tilde{N}(a, r)$, defined as follows:
\beq\label{def: freq-Log}
N(a, r) = \frac{rH^{'}(a, r)}{2H(a, r)}.
\eeq

To pass between $\beta_r(\varphi), \tilde{N}(a, r)$ and $N(a, r)$, we record the following facts: from equation (3.1.22) of ~\cite{HL}, we have that
\beq\label{eq: HL}
H^{'}(a, r) = \left( \frac{n - 1}{r} + O(1)\right) H(a, r)  + 2D(a, r),
\eeq
where $O(1)$ is a function of geodesic polar coordinates $(r, \theta)$ bounded in absolute value by a constant $C$ independent of $r$.
More precisely, in \cite{HL} a certain normalizing factor $\mu$ is introduced in the integrand in the definitions of $H(a,r)$ and $D(a,r)$. As it turns out by the construction, $C_1 \leq \mu \leq C_2$  where $C_1, C_2$ depend on the ellipticity constants of the PDE, the dimension $n$ and a bound on the coefficients (cf. 3.1.11, \cite{HL}).

This gives us that when $\tilde{N}(a, r)$ is large, we have,
\beq \label{13}
N(a, r) \sim \tilde{N}(a, r).
\eeq

We also remark that the frequency $N(a, r)$ is almost-monotonic in the following sense: for any $\varepsilon > 0$, there exists $R > 0$ such that if $r_1 < r_2 < R$, then
\beq\label{ineq:almost-mon}
N(a, r_1) \leq N(a, r_2)(1 + \varepsilon).
\eeq
This follows from (\ref{eq: HL}) above and standard properties of $\tilde{N}(a, r)$ derived in \cite{HL}.


As regards growth exponents $\beta$, also of particular importance to us is the so-called doubling exponent of $\varphi_\lambda$ at a point, which corresponds to the case $r' = \frac{1}{2}r$ in Theorem \ref{thm:DF-bounds}, and is defined as
 \beq\label{def:doubling-exp}
 \Cal{N}(x, r) = \log \frac{\sup_{B(x, 2r)} |\varphi_\lambda|}{\sup_{B(x, r)} |\varphi_\lambda|}.
 \eeq
 One also naturally considers the $L^p$-variant of doubling exponents, namely, 
\begin{definition}
\beq\label{def:doubling-exp_p}
 \Cal{N}_p(x, r) := \log 
 \frac{\| \varphi_\lambda\|^p_{L^p(B(x, 2r))}}{{\| \varphi_\lambda\|^p_{L^p(B(x, r))}}}
 \eeq
 \end{definition}
We quickly state a lemma.
\begin{lemma}\label{lem:doub_exp_comp}
    Up to constants depending on $p$, the doubling exponents in (\ref{def:doubling-exp}) and (\ref{def:doubling-exp_p}) are comparable, in the following precise sense:
    \beq
    \Cal{N}\left(x, \frac{3}{4}r\right) \lesssim_p \Cal{N}_p(x, r) \lesssim_p \Cal{N} (x, r).
    \eeq
\end{lemma}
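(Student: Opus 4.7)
The plan is to derive both inequalities by pairing the Moser mean-value estimate, which is applicable to $\varphi_\lambda$ on balls of wavelength size after the rescaling in (\ref{eq:eigen_rescale}), with the trivial volume bound on $L^p$ norms. Concretely, Moser's inequality gives $\sup_{B(x,\rho_1)} |\varphi_\lambda|^p \lesssim (\rho_2-\rho_1)^{-n} \|\varphi_\lambda\|_{L^p(B(x,\rho_2))}^p$ whenever $0 < \rho_1 < \rho_2$ and both balls sit inside a fixed wavelength-scale ball, while conversely one has the immediate bound $\|\varphi_\lambda\|_{L^p(B(x,\rho))}^p \leq |B(x,\rho)| \cdot \sup_{B(x,\rho)} |\varphi_\lambda|^p$. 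Abbreviating $M(s) = \sup_{B(x,s)}|\varphi_\lambda|$ and $I(s) = \|\varphi_\lambda\|_{L^p(B(x,s))}^p$, these two inequalities sandwich $M$ and $I$ at comparable but slightly separated scales, and all the radius choices below are designed to provide the Moser buffer.

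For the upper bound $\Cal{N}_p(x,r) \lesssim_p \Cal{N}(x,r)$, the trivial bound gives $I(2r) \lesssim r^n M(2r)^p$, while Moser applied with $\rho_1 = r/2,\, \rho_2 = r$ gives $I(r) \gtrsim r^n M(r/2)^p$; dividing and taking logarithms yields
\[
\Cal{N}_p(x,r) \;\leq\; p\bigl(\Cal{N}(x,r) + \Cal{N}(x,r/2)\bigr) + O(1).
\]
The almost-monotonicity (\ref{ineq:almost-mon}) of the frequency, together with the comparability (\ref{13}) between $N$, $\tilde N$ and the $L^\infty$-doubling exponent, then yields $\Cal{N}(x,r/2) \lesssim \Cal{N}(x,r)$ up to constants; in the high-frequency regime $\Cal{N}$ is large and the additive $O(1)$ is absorbed, delivering the desired inequality.

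For the lower bound $\Cal{N}(x,3r/4) \lesssim_p \Cal{N}_p(x,r)$, the factor $3/4$ is chosen precisely so that Moser with $\rho_1 = 3r/2,\, \rho_2 = 2r$ (buffer $r/2$) yields $M(3r/2)^p \lesssim r^{-n} I(2r)$, while the trivial bound gives $M(3r/4)^p \gtrsim r^{-n} I(3r/4)$; combining,
\[
\Cal{N}(x,3r/4) \;\lesssim_p\; \log \frac{I(2r)}{I(3r/4)} \;=\; \Cal{N}_p(x,r) \;+\; \log \frac{I(r)}{I(3r/4)}.
\]
Since $I$ is monotone in the radius, the last term is bounded by $\log I(3r/2)/I(3r/4) = \Cal{N}_p(x,3r/4)$, reducing matters to almost-monotonicity of the $L^p$-doubling exponent at neighbouring scales. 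This last ingredient is the main technical obstacle: it is the $L^p$ analogue of (\ref{ineq:almost-mon}), and I would establish it by translating to the frequency function $N(a,r)$ for the rescaled elliptic equation (\ref{ellip})--(\ref{eq:def_L_pdo})---whose almost-monotonicity is available from the Garofalo--Lin/Almgren machinery recorded via (\ref{eq: HL})---and then transferring back using the equivalence (\ref{13}) together with the standard Caccioppoli/Moser interplay between the volume-integral $I(s)$ and the boundary-integral $H(a,s)$.
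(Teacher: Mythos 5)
Your proposal is essentially the same route the paper sketches (trivial $L^p\le L^\infty$ bound on a ball for one direction, the Moser local maximum principle, Theorem~\ref{thm:Moser}, for the other), but you have correctly noticed and made explicit something the paper's two-line proof glosses over: the trivial bound and Moser's inequality act at slightly offset radii, so after dividing one is left with residual ratios such as $M(r)/M(r/2)$ or $I(r)/I(3r/4)$ that are not yet controlled by $\Cal{N}(x,r)$ or $\Cal{N}_p(x,r)$ alone. Absorbing these requires some form of almost-monotonicity of the growth exponent across neighbouring scales; you are right to flag this as an ingredient, though it is not really an ``obstacle'' --- the paper records the relevant facts itself: almost-monotonicity of the frequency in (\ref{ineq:almost-mon}), the comparisons (\ref{13})--(\ref{14}), and, immediately after the lemma, the remark that the $L^p$-doubling exponent $\Cal{N}_p$ is almost-monotone by Lemma~2.2 of \cite{LM}. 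Two small points of polish: (i) for the direction $\Cal{N}_p(x,r)\lesssim_p\Cal{N}(x,r)$ your residual term $\Cal{N}(x,r/2)\lesssim\Cal{N}(x,r)$ is cleanest to justify directly via (\ref{14}) and (\ref{ineq:almost-mon}) for the associated harmonic extension $u$, rather than routing through (\ref{13}) and back; (ii) for the other direction you pass through $\Cal{N}_p(x,3r/4)\lesssim\Cal{N}_p(x,r)$, which is fine given the $L^p$-monotonicity from \cite{LM}, but you could equally convert $\log\bigl(I(r)/I(3r/4)\bigr)$ back to an $L^\infty$ growth exponent via one more trivial-bound/Moser pairing and then invoke almost-monotonicity at the $L^\infty$ level only, avoiding any worry about circularity in where the $L^p$-monotonicity is established. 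Also note that to invoke Theorem~\ref{thm:Moser} you should apply it to $u=|\varphi_\lambda|$, which is a subsolution of $L$ since it is the maximum of the two solutions $\pm\varphi_\lambda$; the paper's proof implicitly does the same.
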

\begin{proof}
    Trivially, we have for any function $u$ that 
    $$
    \left(\int_S |u|^p\right)^{1/p} \leq \| u\|_{L^\infty(S)}\; |S|^{1/p}.
    $$
    For the inequality in the other direction, it suffices if $u$ verifies $Lu \leq 0$, where $L$ is defined in (\ref{eq:def_L_pdo}). This follows from (\ref{GT}) below, and finishes the proof. 
\end{proof}
Analogous to (\ref{ineq:almost-mon}), there is also a monotonicity property for the doubling exponents $\Cal{N}_p(x, r)$, see Lemma 2.2 of \cite{LM} and references therein.

Now, consider an eigenfunction $\varphi_\lambda$ on $M$. We convert $\varphi_\lambda$ into a harmonic function in the following standard way. Let us consider the Riemannian product manifold $\bar{M} :=  M \times \mathbb{R}$ - a cylinder over $M$, equipped with the standard product metric $\bar{g}$. By a direct check, the function
\begin{equation}\label{eq:harm_func}
	u(x, t) := e^{\sqrt{\lambda} t} \varphi_\lambda (x)
\end{equation}
is harmonic. Note that till now, we have mentioned two seemingly unrelated classes of growth exponents, namely $\tilde{N}(a, r)$ for the harmonic function $u$ and $\Cal{N}(x, r)$ for the eigenfunction $\varphi_\lambda$ (of course, the latter can also be defined equally easily for harmonic functions). We make the following 
\begin{convention}\label{con:harm_freq}
$\Cal{N}(x, r)$ will denote the doubling exponent of an eigenfunction in a ball $B(x, r) \subset M$, and $\tilde{N}(x, r)$ will denote the frequency function for the harmonic function $u(x, t)$ as in (\ref{eq:harm_func}) in a ball $B((x,0), r) \subseteq M \times R_0 \subset M \times \RR$, where $R_0$ is a constant. When we say $\varphi_\lambda$ in controlled in the sense of frequency function on $B(x, r) \subseteq M$, we mean that $\tilde{N}(x, r)$ (or $N(x, r)$) is bounded by some constant independent of $\lambda$.
\end{convention}
From the proof of Remark 3.1.4 of ~\cite{HL}, one knows that they are related via the following:
\beq\label{14}
\tilde{N}(x, r) \gtrsim \Cal{N}(x, r'),\quad r' \in (0, \frac{r}{2}].
\eeq
See also the related Theorem 3.1.3 of \cite{HL}.



Hence, by Theorem \ref{thm:DF-bounds}, the harmonic function $ u $ in (\ref{eq:harm_func}) has a doubling exponent which is also bounded by $ C \sqrt{\lambda} $ in balls whose radius is no greater than $ r_1= r_1(M,g) > 0$.

It is well-known that doubling conditions imply upper bounds on the frequency (cf. Lemma $ 6 $, \cite{BL}):

\begin{lemma} \label{lem:Frequency-Bound}
	For each point $p = (x, t) \in \bar{M} $ the harmonic function $ u(p) $ satisfies the following frequency bound:
	\begin{equation}
		\tilde{N}(p, r) \leq C \sqrt{\lambda},
	\end{equation}
	where $ r $ is any number in the interval $ (0, r_2), r_2 = r_2(M, g) $ and $ C > 0 $ is a fixed constant depending only on $ M, g $.
\end{lemma}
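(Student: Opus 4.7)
The plan is to deduce the frequency bound from the $L^\infty$-doubling bound already established for $u$, along the lines of Lemma 6 in \cite{BL}.

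The first step is to upgrade the sup-doubling $\sup_{B(p, 2r)}|u| \leq e^{C\sqrt{\lambda}} \sup_{B(p, r)}|u|$, valid for $r < r_1$, into an $L^2$-doubling statement. Since $u$ is harmonic on $\bar{M}$, the function $u^2$ is subharmonic, so the mean value inequality gives $\sup_{B(p, r/2)} u^2 \lesssim |B(p, r)|^{-1} \int_{B(p, r)} u^2$, while the trivial bound $|B(p, r)|^{-1} \int_{B(p, r)} u^2 \leq \sup_{B(p, r)} u^2$ runs in the other direction. Chaining these with the sup-doubling hypothesis across comparable scales yields
\[
\int_{B(p, 2r)} u^2 \leq e^{C'\sqrt{\lambda}} \int_{B(p, r)} u^2
\]
for $r < r_1/4$.

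The second step passes from this $L^2$-volume doubling to a bound on $\tilde{N}$. Writing $\int_{B(p, r)} u^2 = \int_0^r H(p, s)\, ds$, combining with (\ref{eq: HL}), and using the definition $\tilde{N}(p, s) = sD(p, s)/H(p, s)$, one derives
\[
\frac{d}{ds}\log H(p, s) = \frac{n - 1}{s} + \frac{2\tilde{N}(p, s)}{s} + O(1).
\]
Integrating from $r$ to $2r$ and invoking the almost-monotonicity (\ref{ineq:almost-mon}) of the frequency (which transfers from $N$ to $\tilde{N}$ via (\ref{13}) in the only nontrivial regime, where $\tilde{N}$ is large) gives
\[
\log\frac{H(p, 2r)}{H(p, r)} \geq 2 \tilde{N}(p, r)\log 2 \cdot (1 - o(1)) - O(1).
\]
The left-hand side is bounded above by $C''\sqrt{\lambda}$ via the $L^2$-volume doubling, after one more appeal to (\ref{eq: HL}) to compare $\int_0^r H(p, s)\, ds$ with $r H(p, r)$. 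Rearranging yields $\tilde{N}(p, r) \leq C\sqrt{\lambda}$ for all $r \in (0, r_2)$.

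The main obstacle is tracking the implicit constants and the $O(\cdot)$ manifold corrections in (\ref{eq: HL}) and (\ref{ineq:almost-mon}) uniformly in $\lambda$, so that the resulting frequency bound is of the sharp order $\sqrt{\lambda}$ and not worse. This is handled by choosing $r_2 = r_2(M, g)$ small enough that, as in the rescaling discussion following (\ref{ineq:loc_chart_1}), the underlying elliptic operator on $B(p, r)$ is a small perturbation of a constant-coefficient operator, for which the Almgren-type identities hold with sharp constants absorbable into $C$.
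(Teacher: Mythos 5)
The paper does not prove Lemma~\ref{lem:Frequency-Bound}; it simply records that doubling bounds imply frequency bounds and refers the reader to Lemma~6 of \cite{BL}. Your sketch is a correct rendition of the standard argument behind that cited lemma: pass from sup-doubling to $L^2$-doubling using subharmonicity of $u^2$ and the mean value inequality, then convert solid-ball $L^2$-doubling into spherical doubling $H(p,2r)/H(p,r)$, read off a lower bound for $\tilde N$ from the differential identity $\frac{d}{ds}\log H = \frac{n-1}{s} + \frac{2\tilde N(p,s)}{s} + O(1)$ after integrating over a dyadic annulus, and use almost-monotonicity (transferred from $N$ to $\tilde N$ via (\ref{13}) in the only interesting regime) to replace the averaged $\tilde N$ by $\tilde N(p,r)$.

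One step you gloss over deserves a word. You say the left-hand side $\log\bigl(H(p,2r)/H(p,r)\bigr)$ is bounded by $C''\sqrt{\lambda}$ ``after one more appeal to (\ref{eq: HL}) to compare $\int_0^r H(p,s)\,ds$ with $rH(p,r)$,'' but actually two comparisons in opposite directions are needed, and both come from monotonicity of $H$ rather than from (\ref{eq: HL}) directly. Since $H'\ge 0$ for small $r$ (this does follow from (\ref{eq: HL})), one has $\int_0^r H \le r\,H(p,r)$ and, separately, $H(p,2r) \le \frac{1}{2r}\int_{2r}^{4r} H \le \frac{1}{2r}\int_0^{4r} H$; combining these with the $L^2$-doubling applied across two dyadic scales, $\int_0^{4r} H \le e^{2C'\sqrt{\lambda}}\int_0^r H$, gives $H(p,2r) \le \tfrac12 e^{2C'\sqrt{\lambda}} H(p,r)$. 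With that spelled out, the proposal closes the argument and matches what the cited \cite{BL} proof does.
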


For a proof of Lemma \ref{lem:Frequency-Bound} we refer to Lemma $ 6 $, \cite{BL}.

Finally, we need the following result, which is Lemma 7.4 of \cite{L2}:
\begin{lemma}\label{lem:Log_upp} 
There exists $r = r(M,g)$ and $N_0 = N_0(M, g)$ such that for any points $x_1, x_2 \in  B(x, r)$ and $\tau$ such that $N(x_i, \tau) > N_0$ and $\dist (x_1,x_2) < \tau < r$, there exists $C^* = C^*(M,g) > 0$ such that 
$$
N(x_2,C^*\tau) > \frac{99}{100}N(x_1, \tau).
$$
\end{lemma}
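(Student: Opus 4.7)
The plan is to reduce the lemma to an analogous comparison of the doubling exponents $\Cal{N}$ at the shifted centers $x_1, x_2$, and then transport growth from $x_1$ to $x_2$ through elementary ball inclusions followed by the almost-monotonicity in (\ref{ineq:almost-mon}).

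First, using (\ref{13}), (\ref{14}) and Lemma \ref{lem:doub_exp_comp}, the hypothesis $N(x_i, \tau) > N_0$ (with $N_0$ large) translates quantitatively into $\Cal{N}(x_i, c\tau)$ being large for some fixed $c \in (0, 1/2]$, and the desired bound on $N(x_2, C^*\tau)$ reduces to showing $\Cal{N}(x_2, C_1^* \tau) \geq (1 - \alpha)\Cal{N}(x_1, \tau)$ for some $\alpha$ slightly less than $1/100$ and some $C_1^*$ depending only on $(M, g)$. This passage is clean because the bounded additive discrepancies between $N$, $\tilde{N}$ and $\Cal{N}$ become negligible once $N_0$ is large.

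Second, I would fix $C^* \geq 10$ and use the triangle inequality together with $\dist(x_1, x_2) < \tau$ to write the inclusions
$$B(x_1, (C^* - 1)\tau) \subset B(x_2, C^*\tau) \subset B(x_1, (C^* + 1)\tau),$$
and the analogous inclusions at scale $2C^*\tau$. These immediately give
$$\Cal{N}(x_2, C^*\tau) \;=\; \log \frac{\sup_{B(x_2, 2C^*\tau)} |\varphi_\lambda|}{\sup_{B(x_2, C^*\tau)} |\varphi_\lambda|} \;\geq\; \log \frac{\sup_{B(x_1, (2C^* - 1)\tau)} |\varphi_\lambda|}{\sup_{B(x_1, (C^* + 1)\tau)} |\varphi_\lambda|},$$
so the problem becomes one of estimating the right-hand side purely at the center $x_1$. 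Third, the almost-monotonicity (\ref{ineq:almost-mon}), combined with the analogous almost-monotonicity of $\Cal{N}_p(x_1, \cdot)$ cited just after Lemma \ref{lem:doub_exp_comp}, applied over the dyadic range $[\tau, 2C^*\tau]$, yields that the right-hand side above is at least $(1 - \eta)\Cal{N}(x_1, \tau)$, where $\eta \to 0$ as $N_0 \to \infty$ and the monotonicity parameter $\varepsilon \to 0$. Choosing $C^*$ first, then $\varepsilon$ small and $N_0$ large, this loss is absorbed into the $\frac{99}{100}$ factor.

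The main obstacle is the three-parameter calibration: $C^*$ must be large enough that $(C^* - 1)\tau$ and $(C^* + 1)\tau$ are well separated (otherwise the comparison in the second step degenerates), but not so large that the accumulated almost-monotonicity loss $(1 + \varepsilon)^{O(\log C^*)}$ at scales bridging $\tau$ and $C^*\tau$ destroys the $\frac{99}{100}$ target. The constant $\frac{99}{100}$ is cosmetic; the genuine content is that the multiplicative loss is strictly less than $1$, and this forces the calibration to proceed in the order $C^* \mapsto \varepsilon \mapsto N_0$, each chosen depending only on $(M, g)$ and on the previously fixed parameters.
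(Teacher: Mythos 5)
The paper does not prove this statement: it is imported verbatim as Lemma~7.4 of \cite{L2}, so there is no in-paper proof to compare against. That said, your outline --- triangle-inequality ball inclusions $B(x_1,(C^*-1)\tau)\subset B(x_2,C^*\tau)\subset B(x_1,(C^*+1)\tau)$ (and at scale $2C^*\tau$) to transfer growth from $x_2$ to $x_1$, followed by almost-monotonicity (\ref{ineq:almost-mon}) of the frequency to stretch the comparison from scale $\tau$ up to scale $\sim C^*\tau$, with the parameter order $C^*\mapsto\varepsilon\mapsto N_0$ --- is the natural and, to my knowledge, essentially correct route, and it is in the same spirit as Logunov's argument.

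Two spots deserve tightening. First, your opening sentence treats the passage $N\leftrightarrow\tilde N\leftrightarrow\mathcal{N}$ as incurring only ``bounded additive discrepancies.'' The relation $N\leftrightarrow\tilde N$ from (\ref{eq: HL}) is indeed additive, but the relation between the frequency $\tilde N$ and the sup-doubling exponent $\mathcal{N}$ (or the more general growth exponent over a ratio $R/r$) is \emph{multiplicative}: integrating $H'/H$ gives roughly $\mathcal{N}(x,r)\approx(\log 2)\,\tilde N(x,\cdot)+O(1)$, and more generally the exponent over $[R_1,R_2]$ carries the factor $\log(R_2/R_1)$ rather than $\log 2$. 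The paper's stated facts (\ref{13}), (\ref{14}), and Lemma~\ref{lem:doub_exp_comp} only give one-sided comparability up to unspecified constants, which by themselves are too coarse to reach a factor $99/100$; your argument needs the sharper $\log(R_2/R_1)$ calculus explicitly, and it works precisely because the $\log 2$-type factors appear symmetrically on both sides and cancel, while the residual loss $\log\bigl[(2C^*-1)/(C^*+1)\bigr]/\log 2\to 1$ as $C^*\to\infty$. Second, and related: $C^*\geq 10$ is nowhere near large enough, since $\log(19/11)/\log 2\approx 0.79$, which would overshoot the allowed $1\%$ loss by a wide margin --- one needs $C^*$ in the hundreds before that ratio exceeds $0.99$; you do acknowledge that $C^*$ is to be calibrated, but the placeholder is misleading. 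A cleaner version of the same argument avoids the $N\leftrightarrow\mathcal{N}$ detour entirely by working with the solid integral $h(x,r)=\int_0^r H(x,s)\,ds$ (a genuine volume integral, so ball inclusions apply directly) together with the Han--Lin differential identity, keeping everything in terms of $N$ from start to finish.
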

\subsubsection{Local asymmetry of nodal domains} \label{subsubsec:Asymmetry} Our proof also uses the concept of local asymmetry of nodal domains, which roughly means the following. Consider a manifold $M$ with smooth metric. If the nodal set of an eigenfunction $\varphi_\lambda$ enters sufficiently deeply into a geodesic ball $B$, then the volume ratio between the positivity and negativity set of $\varphi_\lambda$ in $B$ is controlled in terms of $\lambda$. More formally, we have the following result from ~\cite{Ma1}:

\beq\label{man}
	\frac{|\{\varphi_\lambda > 0\} \cap B|}{|B|} \gtrsim \frac{1}{\langle \beta_{1/2}(\varphi )\rangle^{n - 1}},
\eeq
where $\langle \beta_r\rangle = \text{max}\{\beta_r, 3\}$. In particular, when combined with the growth bound of Donnelly-Fefferman, this yields that
$$
\frac{|\{\varphi_\lambda > 0\} \cap B|}{|B|} \gtrsim \frac{1}{\lambda^{\frac{n - 1}{2}}}.
$$
This particular question about comparing the volumes of positivity and negativity seems to originate from \cite{ChMu}, \cite{DF1}, and then work of Nazarov, Polterovich and Sodin (cf. \cite{NPS}), where they also conjecture that the present bound is far from being optimal. Moreover, the belief in the community seems to be that the sets of positivity and negativity should have volumes which are comparable up to a factor of $1/\lambda^\epsilon$ for small $\epsilon > 0$.

\subsubsection{Local elliptic maximum principle}  We quote the following local maximum principle, which appears as Theorem 9.20 in ~\cite{GT}.
\begin{theorem}\label{thm:Moser}
	Suppose $Lu \leq 0$ on $B_1$. Then
	\beq\label{GT}
	\sup_{B(y, r_1)} u \leq C(r_1/r_2, p)\left( \frac{1}{\Vol (B(y, r_2))}\int_{B(y, r_2)} (u^{+}(x))^p dx\right)^{1/p},
	\eeq
	for all $p > 0$, whenever $0 < r_1 < r_2$ and $B(y, r_2) \subseteq B_1$.
\end{theorem}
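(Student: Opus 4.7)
The plan is to carry out the classical Moser iteration, which is the standard route to local $L^\infty$ bounds for divergence-form subsolutions. In weak form, $Lu \leq 0$ reads $\int a^{ij}\partial_j u\, \partial_i \phi \leq \varepsilon_0 \int q u \phi$ for every nonnegative $\phi \in H^1_0(B_1)$; since $q$ is bounded, the zeroth-order term is a harmless lower-order perturbation that can be absorbed at each step (note also that $(u-k)^+$ for $k\geq 0$ is itself a subsolution of the principal part up to lower order, so without loss of generality I work directly with $u^+$).

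First, I would establish a Caccioppoli-type energy inequality. For $\beta \geq 1$ and a cutoff $\eta \in C_c^\infty(B_1)$, test the weak inequality against $\phi = \eta^2 (u^+)^{2\beta - 1}$; using ellipticity of $a^{ij}$ and Young's inequality to peel derivatives of $\eta$ off derivatives of $u^+$ yields
\[
\int \eta^2 (u^+)^{2\beta - 2}|\nabla u^+|^2 \lesssim \beta^2 \int \bigl(|\nabla \eta|^2 + \eta^2\bigr) (u^+)^{2\beta}.
\]
Rewriting this as an $H^1$ bound on $w := \eta(u^+)^\beta$ and feeding into the Sobolev inequality $\|w\|_{L^{2\chi}} \leq C\|\nabla w\|_{L^2}$ (with $\chi = n/(n-2)$ for $n \geq 3$, any $\chi > 1$ for $n = 2$) gives the self-improving step
\[
\Big(\int \eta^{2\chi}(u^+)^{2\beta\chi}\Big)^{1/\chi} \leq C\beta^2 \int \bigl(|\nabla \eta|^2 + \eta^2\bigr)(u^+)^{2\beta}.
\]

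Next, I iterate. Pick nested radii $r_1 < \cdots < \rho_{k+1} < \rho_k < \cdots < r_2$ with $\rho_k - \rho_{k+1} \sim (r_2 - r_1)2^{-k}$ and cutoffs $\eta_k \equiv 1$ on $B(y,\rho_{k+1})$, supported in $B(y,\rho_k)$, with $|\nabla \eta_k| \lesssim 2^k/(r_2 - r_1)$. With $\gamma_k = p\chi^k$ (so $2\beta_k = \gamma_k$), iterating yields
\[
\|u^+\|_{L^{\gamma_{k+1}}(B(y,\rho_{k+1}))} \leq C^{1/\gamma_k} \gamma_k^{1/\gamma_k}\, \|u^+\|_{L^{\gamma_k}(B(y,\rho_k))}.
\]
Convergence of $\sum \chi^{-k}$ and $\sum k \chi^{-k}$ makes the telescoping product finite, and sending $k \to \infty$ gives the stated estimate for every $p \geq p_0$, where $p_0 \geq 2$ is a dimensional threshold.

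The main obstacle is extending to small $p$ (in particular $0 < p < 2$), where the Moser iteration cannot be started directly. I would use the standard interpolation-and-absorption trick. From pointwise domination $(u^+)^{p_0} \leq \bigl(\sup_{B(y,\rho')} u^+\bigr)^{p_0 - p}(u^+)^p$, the estimate at exponent $p_0$ yields
\[
\sup_{B(y,\rho)} u^+ \leq \frac{C}{(\rho' - \rho)^{n/p_0}}\Big(\sup_{B(y,\rho')} u^+\Big)^{1 - p/p_0}\Big(\int_{B(y,\rho')} (u^+)^p\Big)^{1/p_0}.
\]
Applying Young's inequality with conjugate exponents $p_0/(p_0 - p)$ and $p_0/p$ produces $\sup_{B(y,\rho)} u^+ \leq \tfrac{1}{2}\sup_{B(y,\rho')} u^+ + \tfrac{C}{(\rho' - \rho)^{n/p}}\|u^+\|_{L^p(B(y,\rho'))}$, and a real-variable iteration lemma (e.g.\ Lemma 4.3 of Han--Lin) absorbs the sup term on the right-hand side. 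Dividing through by $\Vol(B(y,r_2))^{1/p}$ and tracking dependence on $r_1/r_2$ gives exactly the stated form of the inequality.
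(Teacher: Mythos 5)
This result is quoted in the paper from Gilbarg--Trudinger without proof, and your Moser-iteration argument is precisely the standard proof of the local maximum principle for divergence-form subsolutions (the bounded zeroth-order term $\varepsilon_0 q u$ being absorbed exactly as you describe), including the correct interpolation-and-absorption step via Young's inequality and the real-variable iteration lemma that is genuinely needed to reach $0<p<2$. The only omitted technicality is the usual truncation $u^+\wedge M$ (with $M\to\infty$) required to justify the test function $\eta^2(u^+)^{2\beta-1}$ when $u$ is merely in $H^1$, which is standard.
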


\begin{notation}
    By wavelength, we refer to the quantity $\lambda^{-1/2}$. When we say a ball $B(x, r)$ has wavelength radius, it means that the radius $r$ satisfies $r = c(M, g)\lambda^{-1/2}$. Also, for any ball $B := B(x, r)$, let $\alpha B$ denote the ball $B(x, \alpha r)$, which is the concentric ball of radius $\alpha r$.
\end{notation}

Before beginning the detailed proof of Theorem \ref{thm:mass_non_con_all_dim}, we include a brief
\begin{proof}[Overall strategy and sketch of the proof] We will take a covering of $M$ by wavelength balls, and show 
that at least a fixed percentage of the overall $L^p$-mass collects in balls of controlled growth exponent, also known as ``good balls'' (see Definitions \ref{def:good_ball} and \ref{def:good_freq_func} below). Since the nodal geometry is nicely behaved on good balls, one can further 
inscribe in each such good ball a (smaller) wavelength radius ball where $\varphi_\lambda$ is positive and another (smaller) wavelength radius ball where $\varphi_\lambda$ is negative. Then 
one justifies that either of these inscribed balls and their concentric half balls both collect at least a fixed fraction of the $L^p$-mass of the bigger good ball. The collection of these concentric half-balls will then be wavelength distance away from the nodal set, and still collect sufficient mass, proving (\ref{ineq:L_p_non_con}). 
    
\end{proof}
Now we start proving Theorem \ref{thm:mass_non_con_all_dim} formally.
\begin{proof}
First, we start with a definition following the one in \cite{CM1}. 
 Now, we have 
\begin{definition}\label{def:good_ball}
           A 
           ball $B$ is called $d$-good in the sense of doubling exponent if we have that 
           \beq\label{ineq:L_p_good}
           \frac{\| \varphi_\lambda\|^p_{L^p(2B)}}{{\| \varphi_\lambda\|^p_{L^p(B)}}} \leq 2^d.
           \eeq 
\end{definition}
Naturally one has an analogue for the frequency function as well:
\begin{definition}\label{def:good_freq_func}
           A 
           ball $B(x, r) \subseteq M$ 
           is called $d$-good in the sense of frequency function if we have that 
           \beq
           N\left(x, 
           r\right) \leq d,
           \eeq 
    where $N(x, r)$ is calculated with respect to the harmonic function $u(x, t)$ in the ball $B((x, 0), r) \subseteq M \times R_0$ (see Convention \ref{con:harm_freq} above).     
\end{definition}
Now consider a covering of $M$ by balls $B_j$ of radius $r = \frac{r_0}{\sqrt{\lambda}}$ such that the nodal set enters deeply into each ball, that is, $N_{\varphi_\lambda} \cap \frac{1}{2}B_j \neq \emptyset$ for all $j$. We know that this is possible because of the wavelength density of the nodal set. We can also insist in addition on a  controlled multiplicity of the covering, that is,  each point $x \in M$ is contained in at most $C(M, g)$ of the balls $2B_j$ (this is possible from Bishop-Gromov volume comparison and the constant $C(M, g)$ depends only on the dimension and the lower bound on the Ricci curvature). Now we bring in the following idea from \cite{CM1}: good balls (or, in other words, balls with controlled frequency function) in this collection will contain ``most'' of the mass of an eigenfunction $\varphi_\lambda$. Let $G_d$ denote the union of all the good balls in the above collection $\{ B_j \}$ with controlled frequency function, in the sense of Convention \ref{con:harm_freq}. Because of the comparability estimates (\ref{13}) and (\ref{14}) and Lemma \ref{lem:doub_exp_comp}, we can say that such balls are ``good'' in the sense of Definition \ref{def:good_ball} also. Now, we prove the following 
\begin{lemma}
    We have that 
    \beq
    \|\varphi_\lambda\|_{L^p(G_d)} \gtrsim_{C(M, g)} \|\varphi_\lambda\|_{L^p(M)}.
    \eeq
\end{lemma}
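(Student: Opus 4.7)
The plan is to reduce the frequency-function definition of ``good ball'' to an $L^p$-doubling definition, and then execute a standard covering/pigeonhole argument that shows bad balls can carry only a small fraction of the total $L^p$-mass.

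First I would use Lemma \ref{lem:doub_exp_comp} together with the comparability estimates (\ref{13}) and (\ref{14}) to observe that if $B(x,r)$ is $d$-good in the sense of Definition \ref{def:good_freq_func}, then $\mathcal{N}_p(x, r') \leq d'$ for some nearby radius $r' \sim r$ and some $d' = d'(d, p, M, g)$. Equivalently, up to rescaling of constants, we may replace the frequency condition by the cleaner statement
\[
\| \varphi_\lambda\|_{L^p(2B_j)}^p \leq 2^{d'} \| \varphi_\lambda\|_{L^p(B_j)}^p
\]
for good balls $B_j$ in the cover. Call the remaining balls \emph{bad}; for a bad ball we have the reverse strict inequality with constant $2^{d'}$.

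Next, since $\{B_j\}$ covers $M$ and $\{2B_j\}$ has bounded multiplicity $C = C(M,g)$, we have
\[
\|\varphi_\lambda\|_{L^p(M)}^p \leq \sum_j \|\varphi_\lambda\|_{L^p(B_j)}^p, \qquad \sum_j \|\varphi_\lambda\|_{L^p(2B_j)}^p \leq C \|\varphi_\lambda\|_{L^p(M)}^p.
\]
Splitting the sum over good and bad indices and applying the bad-ball inequality yields
\[
\sum_{j \text{ bad}} \|\varphi_\lambda\|_{L^p(B_j)}^p \leq 2^{-d'}\sum_{j \text{ bad}} \|\varphi_\lambda\|_{L^p(2B_j)}^p \leq 2^{-d'} C\, \|\varphi_\lambda\|_{L^p(M)}^p.
\]
Choosing $d = d(M,g)$ (and hence $d'$) large enough that $2^{-d'}C \leq \tfrac{1}{2}$, the bad contribution is at most half of the total, so the good contribution is at least half:
\[
\sum_{j \text{ good}} \|\varphi_\lambda\|_{L^p(B_j)}^p \geq \tfrac{1}{2} \|\varphi_\lambda\|_{L^p(M)}^p.
\]
A final application of bounded multiplicity, this time on the collection of good balls whose union is $G_d$, gives $\sum_{j \text{ good}} \|\varphi_\lambda\|_{L^p(B_j)}^p \leq C\, \|\varphi_\lambda\|_{L^p(G_d)}^p$, which combines with the previous bound to yield the claim.

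The only nontrivial point is the first reduction: one must be careful that the comparison between $N$, $\tilde{N}$, and $\mathcal{N}_p$ respects a common radius scale, since (\ref{14}) and Lemma \ref{lem:doub_exp_comp} lose a constant factor each time one passes between concentric balls of comparable radii. This is not an essential obstacle because the wavelength cover can always be refined (and multiplicity remains bounded by Bishop-Gromov), but it dictates the precise value of $d$ in the conclusion. Everything else is a clean pigeonhole argument exploiting boundedness of the covering multiplicity.
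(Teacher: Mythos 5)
Your proof is correct and follows essentially the same route as the paper: reduce the frequency-function notion of ``good'' to the $L^p$-doubling notion via Lemma \ref{lem:doub_exp_comp} and the comparability estimates, then run a pigeonhole argument using the covering property and bounded multiplicity to show the bad balls can account for at most a $C2^{-d}$-fraction of the total $L^p$-mass, and choose $d$ large. The paper phrases this slightly more compactly by bounding $\int_{F_d}|\varphi_\lambda|^p$ directly and then writing $\int_{G_d}|\varphi_\lambda|^p \geq \int_M|\varphi_\lambda|^p - \int_{F_d}|\varphi_\lambda|^p$, which lets it skip your final bounded-multiplicity step converting $\sum_{j\text{ good}}\|\varphi_\lambda\|_{L^p(B_j)}^p$ into $\|\varphi_\lambda\|_{L^p(G_d)}^p$, but the content is the same; you are also more careful than the paper about the radius bookkeeping in the first reduction, which is a legitimate (if minor) improvement in rigor.
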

\begin{proof}
    Let $F_d$ denote the union of balls $B_j$ that are not $d$-good. Then, it suffices to estimate $\| \varphi_\lambda \|_{L^p(F_d)}$ from above. We calculate that
    \begin{align*}
        \int_{F_d} |\varphi_\lambda|^p \leq \sum_{B_j \text{ not } d-\text{good}} \int_{B_j} |\varphi_\lambda|^p \leq 2^{-d}  \int_{2B_j} |\varphi_\lambda|^p \leq C(M, g) 2^{-d} \|\varphi_\lambda\|^p_{L^p(M)}, 
    \end{align*}
    which gives the proof. 
\end{proof}
\begin{remark}
    Observe that we have not really established (or claimed) how many ``good balls'' there are, merely the fact that most of the eigenfunction mass is concentrated on such balls. \cite{CM1} establishes that there are at least (up to a geometric constant) $\lambda^{\frac{n + 1}{4}}$ many good balls. This depends on a bound from \cite{S}, and it is not known whether this bound is optimal.
\end{remark}

Now, consider a good ball $B_i$ from the covering considered above, and consider a smaller concentric ball $B_i':= r_0'B_i$ of radius $r' = \frac{r_0'r_0}{\sqrt{\lambda}}$, where $r_0'$ is a small enough constant that we will determine later. In case the nodal set 
still passes deeply through $B_i'$, 
we can show (see for instance, the proof of Lemma 3.1 of \cite{LM}, which in turn seems to be motivated from Theorem 4.1 of \cite{Ma1}) 
that we have a ball $\tilde{B}_i \subseteq B_i'$ of radius $\frac{\tilde{r_0}r_0'r_0}{\sqrt{\lambda}}$ such that  $\varphi_\lambda|_{\tilde{B}_i'}$ 
is positive (we remind the reader once again that all the above constants $r_0, r_0'$ and $\tilde{r_0}$ are independent of $\lambda$ and dependent only on the geometry $(M, g)$). 
Note that due to the almost-monotonicity of frequency function,  $\varphi_\lambda$ has controlled doubling exponent on $B'_i$. 
Now, choose a constant $\rho > 1$ such that $B_i' \subseteq \rho\tilde{B}_i \subseteq B_i
$ (see diagram below). 
It is now clear that it suffices to choose $\rho$ such that the radius of $\rho\tilde{B_i}$ is equal to $2\frac{r'_0r_0}{\sqrt{\lambda}}$, which is double the radius of $B'_i$, and also equal to $\frac{r_0}{C^*\sqrt{\lambda}}$, where $C^*$ is the constant in Lemma \ref{lem:Log_upp} (observe that without loss of generality we can assume that $C^* \geq 2$). This gives that $r_0' = \frac{1}{2C^*}$, which in turn determines  $\tilde{r_0}$. 

\begin{figure}[ht]
    \centering
    {{\includegraphics[width=5cm]{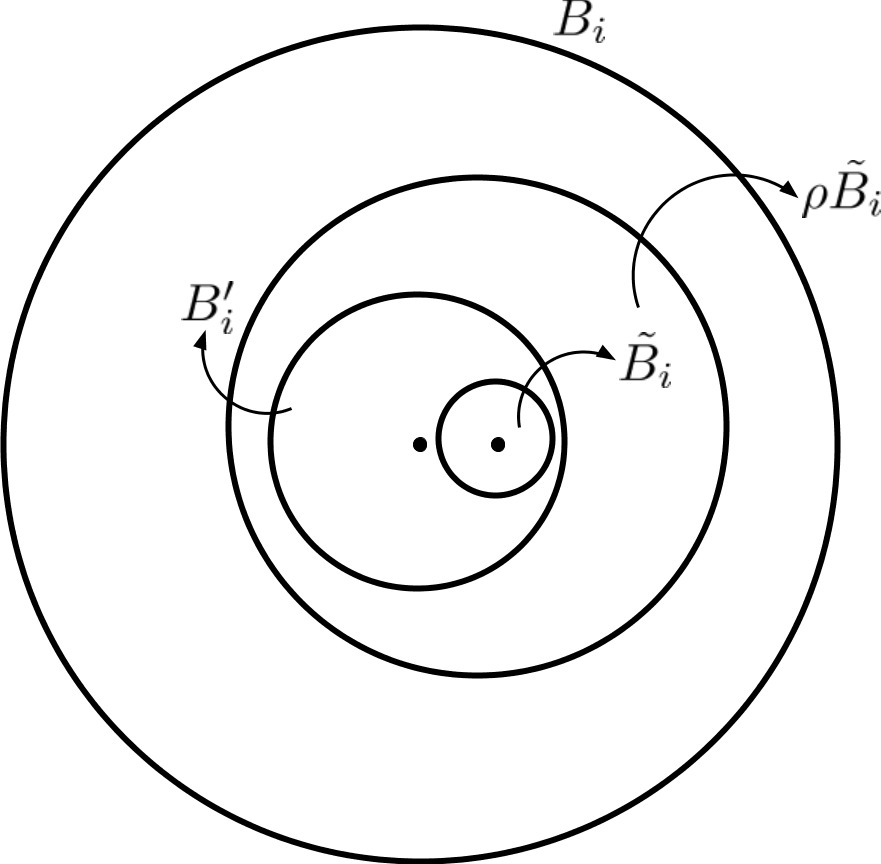} }}
    \qquad
    \caption{$B_i$, $B'_i$, $\tilde{B}_i$ and $\rho\tilde{B}_i$}
    \label{fig:stern example}
\end{figure}

By monotonicity, we know that $B_i'$ has controlled doubling exponent. This gives us that 
$$
\|\varphi_\lambda\|_{L^p(\rho\tilde{B}_i)} \geq \|\varphi_\lambda\|_{L^p(B_i')} \gtrsim  \|\varphi_\lambda\|_{L^p(B_i)}.
$$
Also, by an application of Lemma \ref{lem:Log_upp}, one sees that the frequency function and doubling exponent are controlled on $\rho\tilde{B_i}$. By almost monotonicity they are, in turn, also controlled on 
$\tilde{B}_i$. As a result, 
$$
\|\varphi_\lambda\|_{L^p(\tilde{B}_i)} \gtrsim \|\varphi_\lambda\|_{L^p(\rho\tilde{B}_i)}.
$$
Finally, since the growth on $\tilde{B}_i$ is bounded, by monotonicity of the doubling exponent, we see that the growth on $\frac{1}{2}\tilde{B}_i$ is bounded, which means that 
$$
\|\varphi_\lambda\|_{L^p(\frac{1}{2}\tilde{B}_i)} \gtrsim \|\varphi_\lambda\|_{L^p(\tilde{B}_i)}.
$$
At last, we have a collection $ \tilde{\tilde{B}}_j := \frac{1}{2}\tilde{B}_j$ such that 
$$
\|\varphi_\lambda\|_{L^p(H)} \gtrsim \|\varphi_\lambda\|_{L^p(M)}, 
$$
where $H := \cup_j \tilde{\tilde{B}}_j$, such that the nodal set is (up to a geometric constant) wavelength distance away from each of $\tilde{\tilde{B}}_j$. 

\end{proof}

\begin{remark}\label{rem:JN}
 Observe that the above proof 
 gives us slightly more than claimed in the statement of Theorem \ref{thm:mass_non_con_all_dim}, namely:
$$
\| \varphi^+_\lambda\|_{L^p\left(M \setminus T_\delta\right)} \gtrsim_{p, (M, g)}  \| \varphi_\lambda\|_{L^p(M)} \geq \| \varphi^-_\lambda \|_{L^p(M)}.
$$
Replacing $\varphi_\lambda$ by $-\varphi_\lambda$, we can similarly conclude that 
$$
\| \varphi^-_\lambda\|_{L^p\left(M \setminus T_\delta\right)} \gtrsim_{p, (M, g)}  \| \varphi_\lambda\|_{L^p(M)} \geq \| \varphi^+_\lambda \|_{L^p(M)}.
$$
\end{remark}

\section{Wasserstein distance and proof of Theorem \ref{thm:W_p_bound}}\label{sec:thm_uncer} 
\subsection{Wasserstein metric} Given two measures $\mu, 
\nu$ on a metric space $M$, one defines the Wasserstein metric by 
\beq\label{def:Wasser_p}
W_p(\mu, \nu) = \left( \inf_{\gamma \in \Gamma(\mu, \nu)} \int_{M \times M} d(x, y)^p \; d\gamma(x, y)\right)^{1/p},
\eeq
where $\Gamma(\mu, \nu)$ denotes the set of all {\em couplings} of $\mu$ and $\nu$, that is, the collection of all measures on $M \times M$ with marginals $\mu$ and $\nu$. 

The $1$-Wasserstein distance or Earth Mover’s Distance is the total amount of work (= distance $\times$
mass) required to move $\mu$ to $\nu$. Via the Monge-Kantorovich-Rubinstein duality one gets a particularly nice expression for the case $p = 1$:
\beq\label{def:Wasser_1}
W_1(\mu, \nu) = \sup\left\{ \int_M f\; d(\mu - \nu) : f \text{ is } 1-\text{Lipschitz } \right\}.
\eeq
If one is primarily concerned with lower bounds, 
it is oftentimes more convenient to work with (\ref{def:Wasser_1}). 

For a continuous function with mean zero, the Wasserstein distance between the measures
corresponding to the positive and the negative parts of the function indicates how oscillatory the
function is. If  this is large enough, it should mean intuitively that the work done to move the positive mass to the negative mass should be large. This is antithetical to the function being too oscillatory, at least on the average.
\subsection{Proof of (\ref{ineq:Wasser_bound})} We 
use the characterisation given in equation (\ref{def:Wasser_1}). This makes things easy as the Wasserstein distance is given as a supremum, and the whole problem boils down to the choice of a nice enough $1$-Lipschitz function $f$. 

We begin by expressing $M$ as the disjoint union 
$$ 
M = \bigcup_{j = 1}^{j_0} \Omega_j^+ \cup \bigcup_{k = 1}^{k_0} \Omega_k^- \cup N_{\varphi_\lambda},
$$
where the $\Omega_j^+$ and $\Omega_j^-$ are the positive and negative nodal domains respectively of $\varphi_\lambda$. Then,  we use $f  \sim \frac{1}{\sqrt{\lambda}}$ on $\displaystyle{\bigcup_{j = 1}^{j_0}\left(\Omega_j^+ \setminus T_{r\lambda^{-1/2}}\right)}$, $f  \sim -\frac{1}{\sqrt{\lambda}}$ on $\displaystyle{\bigcup_{k = 1}^{k_0}\left(\Omega_j^- \setminus T_{r\lambda^{-1/2}}\right)}$, and the ``linear interpolant'' function in between. Such a function can be found in general as a variant of the following construction: consider a metric space $(X, d)$, and two open sets $Y, Z \subseteq X$. Assume that $d(Y, Z) = R$. Then one can find a $1$-Lipschitz function $f : X \to \RR$ such that $f = \frac{R}{4}$ on $Y$, and $f = - \frac{R}{4}$ on $Z$, for example, the function $f(x) = \frac{R(d(x, Z) - d(x, Y))}{4(d(x, Z) + d(x, Y))}$. 
This immediately gives that 
\beq
W_1(\varphi_\lambda^+\; dx, \varphi_\lambda^-\; dx) \gtrsim \frac{1}{\sqrt{\lambda}}\|\varphi\|_{L^1(M \setminus T_{r\lambda^{-1/2}})}.
\eeq
With an appeal to Theorem \ref{thm:mass_non_con_all_dim} in the case $p = 1$, we are done.

\subsection{Acknowledgements} The author would like to thank Stefan Steinerberger, Emanuel Milman and Melchior Wirth for helpful conversations and correspondence. Special thanks are due to Alexander Logunov for suggesting some crucial ideas which allowed the author to prove Theorem \ref{thm:mass_non_con_all_dim} in dimensions $n \geq 3$. The author's research was partially supported by SEED Grant RD/0519-IRCCSH0-024. Finally, the author wishes to thank Indian Institute of Technology Bombay for providing ideal working conditions. 

\vspace{4 mm}

\end{document}